\DeclareMathOperator*{\essinf}{ess\,inf}
\DeclareMathOperator*{\esssup}{ess\,sup}
\definecolor{mycolorred}{rgb}{1, 0, 0}
\newtheorem{Theorem}{Theorem}[section]
\newtheorem{Lemma}[Theorem]{Lemma}
\newtheorem{Remark}[Theorem]{Remark}
\newtheorem{Assumptions}[Theorem]{Assumption}
\newcommand{\beq}{\begin{equation}}
\newcommand{\eeq}{\end{equation}}
\def\<{\langle}
\def\>{\rangle}
\def\={{\,=\,}}
\def\+{{\,+\,}}
\def\^gamma{{\tilde{\gamma}}}
\begin{document}

\title{\textbf{Generalized Kuhn-Tucker Conditions for\\ N-Firm Stochastic Irreversible Investment \\  under Limited Resources}\footnote{These results extend a portion of the second author Ph.D.\ dissertation \cite{tesimia} under the supervision of the first and the third authors. Financial support by the German Research Foundation (DFG) via grant Ri--1128--4--1 is gratefully acknowledged by the second and third author.}}
\author{Maria B.\ Chiarolla \thanks{Dipartimento di Metodi e Modelli per l'Economia, il Territorio e la Finanza, Universit\`a di Roma `La Sapienza', Via del Castro Laurenziano 9, 00161 Roma, Italy; \texttt{maria.chiarolla@uniroma1.it}}
 \and Giorgio Ferrari\thanks{Corresponding author.\ Center for Mathematical Economics, Bielefeld University, Universitätsstraße 25, D-33615 Bielefeld, Germany; \texttt{giorgio.ferrari@uni-bielefeld.de}}
 \and  Frank Riedel\thanks{Center for Mathematical Economics, Bielefeld University, Universitätsstraße 25, D-33615 Bielefeld, Germany; \texttt{friedel@uni-bielefeld.de}}}
\date{\today}
\maketitle

\textbf{Abstract.} In this paper we study a continuous time, optimal stochastic investment problem under limited resources in a market with $N$ firms.
The investment processes are subject to a time-dependent stochastic constraint.
Rather than using a dynamic programming approach, we exploit the concavity of the profit functional to derive some necessary and sufficient first order conditions for the corresponding \textsl{Social Planner} optimal policy. Our conditions are a stochastic infinite-dimensional generalization of the Kuhn-Tucker Theorem. The Lagrange multiplier takes the form of a nonnegative optional random measure on $[0,T]$ which is flat off the set of times for which the constraint is binding, i.e.\ when all the fuel is spent.
As a subproduct we obtain an enlightening interpretation of the first order conditions for a single firm in Bank \cite{Bank}.
In the infinite-horizon case, with operating profit functions of Cobb-Douglas type, our method allows the explicit calculation of the optimal policy in terms of the `base capacity' process, i.e.\ the unique solution of the Bank and El Karoui representation problem \cite{BankElKaroui}.

\smallskip

{\textbf{Keywords}}:
stochastic irreversible investment, optimal stopping, the Bank and El Karoui Representation Theorem, base capacity, Lagrange multiplier optional measure.

\smallskip

{\textbf{MSC2010 subsject classification}}: 91B70, 93E20, 60G40, 60H25.

\smallskip

{\textbf{JEL classification}}: C02, E22, D92, G31.

\section{Introduction}
\label{Introduction}

In the latest years the theory of irreversible investment under uncertainty has received much attention in Economics as well as in Mathematics (see, for example, the extensive review in Dixit and Pindyck \cite{DixitPindyck}).
From the mathematical point of view, optimal irreversible investment problems under uncertainty are singular stochastic control problems. In fact, the economic constraint that does not allow disinvestment may be modeled as a `monotone follower' problem; that is, a problem in which investment strategies are given by nondecreasing stochastic processes, not necessarily absolutely continuous with respect to the Lebesgue measure as functions of time. The application of `monotone follower' problems to Economics started with the pioneering papers by Karatzas \cite{Karatzas81}, Karatzas and Shreve \cite{KaratzasShreve84}, El Karoui and Karatzas \cite{KaratzasElKarouiSkorohod} (among others). These Authors studied, by probabilistic arguments, the problem of optimally minimizing a convex cost (or optimally maximizing a concave profit) functional when the controlled diffusion is a Brownian motion tracked by a nondecreasing process, i.e.\ the monotone follower. They showed that any such control problem is closely linked to a suitable optimal stopping problem whose value function $v$ is the derivative of the value function $V$ of the original control problem. 

In the last decade several papers handled singular stochastic control problems of the monotone follower type by deriving first order conditions for optimality and without relying on any Markovian or diffusive setting. That is the case, for instance, of Bank and Riedel \cite{BankRiedel1} in which the Authors studied an intertemporal utility maximization problem with Hindy, Huang and Kreps preferences, of Bank and Riedel \cite{BankRiedel2} in which the optimal dynamic choice of durable and perishable goods is analyzed, or of Riedel and Su \cite{RiedelSu} in which a very general irreversible investment problem with unlimited resources is treated. In these papers the optimal consumption, or investment policy, is constructed as the running supremum of a desirable value. Such level of satisfaction is the optional solution of a stochastic backward equation in the spirit of Bank-El Karoui (cf.\ \cite{BankElKaroui}, Theorem $3$) and may be represented in terms of the value functions of a family of standard optimal stopping problems.

The investment problem becomes even harder if one takes into account the fact that the available resources may be limited. The problem turns into a `finite-fuel' singular stochastic control problem since the total amount of effort (fuel) available to the controller (for example, the firm's manager) is limited.
The mathematical literature on this field started in $1967$ with Bather and Chernoff \cite{Bather2} in the context of controlling the motion of a spaceship. Finite fuel monotone follower problems were then studied by Bene\v s, Shepp and Witsenhausen in $1980$ \cite{Benes}. In $1985$ Chow, Menaldi and Robin \cite{ChowMenaldi} and Karatzas \cite{Karatzas85} used a PDE approach and purely probabilistic arguments, respectively, to show that the optimal policy of a `monotone follower' problem with constant finite fuel is `follow the unconstrained optimal policy until there is some fuel to spend'. Much more difficult is the case of finite fuel given by a time-dependent process, either deterministic or stochastic. 

In $2005$ Bank \cite{Bank}, without relying on any Markovian assumption, generalized the optimal policy proposed by Karatzas \cite{Karatzas85} to the case of a stochastic, increasing, adapted finite fuel process $\theta$. The Author characterized the optimal policy of a cost minimization problem as the unique process satisfying some first order conditions for optimality (cf.\ \cite{Bank}, Theorem $2.2$), `\textsl{the optimal control should be exercised only when its impact on future costs is maximal; on the other hand, when the cost functional's subgradient tends to decrease, then all the available fuel must be used}'.
More in detail, if $\mathbb{S}(\nu)$ is the Snell envelope of the total cost functional's subgradient $\nabla\mathcal{C}(\nu)$ (i.e., $\mathbb{S}(\nu)(t):=\essinf_{t \leq \tau \leq T}\mathbb{E}\{\nabla\mathcal{C}(\tau)|\mathcal{F}_t\}$), and $\mathcal{M}(\nu) + A(\nu)$ is its Doob-Meyer decomposition into a uniformly integrable martingale $\mathcal{M}(\nu)$ and a predictable, nondecreasing process $A(\nu)$, then Bank \cite{Bank}, Theorem $2.2$, proved that $\nu_{*}$ is optimal if and only if
\begin{equation}
\label{FOCinBankint}
\begin{split}
& \mbox{(i)}\,\,\,\,\,\nu_{*}\,\,\mbox{is flat off}\,\,\{\nabla\mathcal{C}(\nu_{*})= \mathbb{S}(\nu_{*})\},  \\
& \mbox{(ii)}\,\,\,\,\,A(\nu_{*})\,\,\mbox{is flat off}\,\,\{\nu_{*}= \theta\}.
\end{split}
\end{equation}
Moreover, the Author constructed the optimal control $\nu_{*}$ in terms of the `base capacity' process, i.e.\ a desirable value of capacity. Mathematically such process is the optional solution of a suitable Bank-El Karoui representation problem \cite{BankElKaroui}.

In this paper we generalize Bank's single firm problem to the case of a Social Planner in a market with $N$ firms in which the total investment is bounded by a stochastic, time-dependent, increasing, adapted finite fuel $\theta(t)$; that is, the case $\sum_{i=1}^N \nu^{(i)}(t) \leq \theta (t)$ $\mathbb{P}$-a.s.\ for all $t \in [0,T]$. The Social Planner's objective is to pursue a vector $\underline{\nu}_{*} \in \mathbb{R}^N_{+}$ of efficient irreversible investment processes  that maximize the aggregate expected profit, net of investment costs, i.e.
\begin{eqnarray}
\label{profittoEuropaintro}
\sup_{\nu^{(i)}:\,\sum_{i=1}^N\nu^{(i)} \leq \theta}\,\,\sum_{i=1}^N \mathbb{E}\bigg\{\int_0^T e^{-\delta(t)}\,\,R^{(i)}(X(t),\nu^{(i)}(t))dt - \int_{[0,T)} e^{-\delta(t)} d\nu^{(i)}(t)\,\bigg\}.  
\end{eqnarray}
Here the operating profit function $R^{(i)}$ of firm $i$, $i=1,2,...,N$, depends directly on the cumulative control exercised since we do not allow for dynamics of the productive capacity. As in Kobila \cite{Kobila}, and Riedel and Su \cite{RiedelSu} (among others), the uncertain status of the economy is modeled by an exogeneous economic shock $\{X(t), t \in [0,T]\}$. Although our finite fuel $\theta$ is increasing as in Bank \cite{Bank}, his results cannot be directly applied to each firm since for each $i$ the investment bound $\theta - \sum_{j \neq i}\nu^{(j)}$ is not an increasing process. To overcome this difficulty we develop a new approach based on a stochastic generalization of the classical Kuhn-Tucker method. That is accomplished as follows.
By applying a version of Koml\`{o}s' theorem for optional random measures (cf.\ Kabanov \cite{Kabanov}, Lemma $3.5$) we prove existence and uniqueness of optimal irreversible investment policies.
Then we use the concavity of the profit functional to characterize the optimal Social Planner policy as the unique solution of some stochastic Kuhn-Tucker conditions.
The Lagrange multiplier takes the form of a nonnegative optional random measure on $[0,T]$ which is flat off the set of times for which the constraint is binding, i.e.\ when all the fuel is spent. Hence, as a subproduct we obtain an enlightening interpretation of the first order conditions that Bank \cite{Bank} proved for a single firm optimal investment problem. In fact, we show that measure the $dA(\nu_{*})$ in (\ref{FOCinBankint}) is equal to the Lagrange multiplier of our control problem
$$d\lambda(t) := e^{-\delta t} [R_{y}(X(t),\theta(t))-\delta] \mathds{1}_{\mathcal{A}}(t) dt,$$ 
$\mathcal{A} \subseteq \{t \geq 0: \nu_{*}(\omega,t)=\theta(\omega,t)\}$, and so it inherits all the regularity properties of $d\lambda$.
As expected in optimization under inequality constraints, our Lagrange multiplier $\lambda$ can grow only when the resource constraint is binding. Moreover, as a new result, it is absolutely continuous with respect to the Lebesgue measure.

In the case of constant finite fuel, we consider two classical monotone follower problems for which the optimal policy is known. By Ito's Lemma, we are able to explicitly find the compensator part in the Doob-Meyer decomposition of the profit (cost) functional's supergradient (subgradient) and to identify it with the Lagrange multiplier of the optimal investment problem. We show that $d\lambda$ has the usual interpretation of shadow price and, again, it has a density with respect to the Lebesgue measure.

Finally, when the $N$ firms have operating profit functions of Cobb-Douglas type, with a different parameter for each of them, our generalized stochastic Kuhn-Tucker approach allows for the explicit calculation of the Social Planner optimal investment strategy. Such optimal policy is given in terms of the `base capacity' processes $l^{(i)}$, i.e.\ the unique solutions of suitable Bank-El Karoui representation problems \cite{BankElKaroui}. 
Indeed, we show that the optimal Social Planner investment policy for firm $i$, $i = 1,2,...,N$, behaves like that of a monopolistic firm which has at disposal a fraction $\beta_i$ of the available resources $\theta$; that is,
$$\nu^{(i)}_{*}(t) = \sup_{0 \leq u < t}(l^{(i)}(u) \wedge \beta_i(u) \theta(u)) \vee y^{(i)},\qquad i=1,2,...,N,$$
with $y^{(i)}$ initial capacity value for firm $i$.
In particular, that fraction is given by
$$\beta_i(t) := \frac{l^{(i)}(t)}{\sum_{j=1}^N l^{(j)}(t)},$$
and therefore $\beta_i(t)\theta(t)$ represents a `fair amount' of resources that has to be assigned to firm $i$ according to its desirable value of capacity at time $t$, i.e.\ $l^{(i)}(t)$.
Even in this more complicated multivariate case, we derive the explicit form of the absolutely continuous Lagrange multiplier optional measure.

The paper is organized as follows. In Section \ref{TheModel} we set the model. In Section \ref{KTapproachmonopolies} we introduce the generalized stochastic Kuhn-Tucker conditions for the Social Planner problem.
In Section \ref{Applications} we find the Lagrange multiplier optional measure for some `finite-fuel' problems from the literature (cf.\ Bank \cite{Bank} and Karatzas \cite{Karatzas85}, among others). Finally, in Section \ref{CobbDouglasSocialPlannercase} we explicitly solve an $N$-firm Social Planner optimization problem with Cobb-Douglas operating profits and stochastic, time-dependent `finite-fuel'.

%%%%%%%%%%%%%%%%%%%%%%%%%%%%%%%%%%%%%%%%%%%%%%%%%%%%%%%%%%%%%%%%%%%%%%%%%%%%%%%%%%%%%%%%%%%%%%%%%%%%%%%%%%%%%%%%%%%%%%%%%%%%%%%%%%%%%%%%%%%%%%%%%%%%%%%%%%%%%%%%%%%%%%%%%%%%%%%%%%%%%%%%%%%%%%%%%%%%%%%%%%%%%%%%%%%%%%%%%%%%%%%%%%%%%

\section{The Model}
\label{TheModel}

We consider a market with $N$ firms on a time horizon $T \leq + \infty$. Let $(\Omega, \mathcal{F}, \left\{\mathcal{F}_t\right\}_{t \in [0,T]}, \mathbb{P})$ be a complete filtered probability space with the filtration $\{\mathcal{F}_t, t \in [0,T]\}$ satisfying the usual conditions. 
The cumulative irreversible investment of firm $i$ up to time $t$, $i=1,2,...,N$, denoted by $\nu^{(i)}(t)$, is an adapted process, nondecreasing, left-continuous, finite a.s.\ s.t.\ $\nu^{(i)}(0)=y^{(i)}> 0$.

The firms are financed entirely by equities but we focus primarily on the irreversibility of investments and do not model precisely the rest of the economy. It is reasonable to assume that the firms cannot invest in natural resources as much as they like. In fact, we assume that the total amount of natural resources available at time $t$ is a finite quantity $\theta(t)$; that is,
\beq
\label{constraint}
\sum_{i=1}^N \nu^{(i)}(t) \leq \theta(t),\,\,\,\,\,\mathbb{P}\mbox{-a.s.},\,\,\,\,\,\mbox{for}\,\,\,\,\,t \in [0,T].
\eeq
The stochastic time-dependent constraint $\{\theta(t), t \in [0,T]\}$ is the cumulative amount of resources extracted up to time $t$. It is a nonnegative and nondecreasing adapted process with left-continuous paths, which starts at time zero from $ \theta(0) = \theta_o > 0$. We assume
\beq
\label{integrabilitavincolo}
\mathbb{E}\{\theta(T)\}<+\infty.
\eeq
We denote by $\mathcal{S}_{\theta}$ the nonempty set of admissible investment plans, i.e.
\begin{eqnarray*}
\mathcal{S}_{\theta}&\hspace{-0.25cm}:=\hspace{-0.25cm}&\{\underline{\nu}:\Omega \times [0,T] \mapsto  \mathbb{R}_{+}^N,\,\,\mbox{nondecreasing},\,\,\mbox{left-continuous},\,\,\mbox{adapted process}\,\,\mbox{s.t.} \nonumber \\ &&
\hspace{0.2cm}\nu^{(i)}(0)=y^{(i)},\,\,\mathbb{P}\mbox{-a.s.},\,\,i=1,2,...,N,\,\,\mbox{and}\,\,\sum_{i=1}^N \nu^{(i)}(t) \leq \theta(t),\,\,\mathbb{P}\mbox{-a.s.}\,\,\forall t \in[0,T]\}.
\end{eqnarray*}

Let $\{X(t), t \in [0,T]\}$ be some exogenous real-valued state variable progressively measurable with respect to $\mathcal{F}_t$. It may be regarded as an economic shock, reflecting the changes in technological ouput, demand and macroeconomic conditions which have direct or indirect effect on the firm's profit. At the moment we do not make any Markovian assumption.

We take the capital good as numeraire, hence we express profits, costs etc.\ in real terms, not nominal ones. Hence the price of a unitary investment is equal to one. 
We take the point of view of a fictitious \textit{Social Planner} aiming to maximize the aggregate expected profit, net of investment costs, $\mathcal{J}_{SP}(\underline{\nu})$ (see equation (\ref{JSP}) below), by allocating efficiently the available resources. We denote by $\delta(t)$ the Social Planner discount factor. $\delta(t)$ is a nonnegative, optional process, bounded uniformly in $(\omega, t) \in \Omega \times [0,T]$.
Assumption (\ref{integrabilitavincolo}) ensures 
\beq
\label{costolimitato}
\mathbb{E}\bigg\{\int_{[0,T)} e^{-\delta(t)}\,d\nu^{(i)}(t)\bigg\} < +\infty,\qquad i=1,2,...,N,
\eeq
i.e.\ the investment plan's expected net present value of firm $i$ is finite.

The operating profit function of firm $i$ is $R^{(i)}: \mathbb{R} \times \mathbb{R}_{+} \mapsto \mathbb{R}_{+}$, $i=1,2,...,N$. At time $t$, when the investment of firm $i$ is $\nu^{(i)}(t)$, $R^{(i)}\left(X(t),\nu^{(i)}(t)\right)$ represents the revenue of firm $i$ under the shock process $X(t)$.
The Social Planner problem is
\beq
\label{problemaEuropa}
V_{SP}:=\sup_{\underline{\nu} \in \mathcal{S}_{\theta}}\,\mathcal{J}_{SP}(\underline{\nu}),
\eeq
where
\begin{eqnarray}
\label{JSP}
\mathcal{J}_{SP}(\underline{\nu}):= \sum_{i=1}^N \mathcal{J}_{i}(\nu^{(i)})
\end{eqnarray}
and, for $i=1,2,...,N$,
\beq
\label{profittoatteso}
\mathcal{J}_{i}(\nu^{(i)})=\mathbb{E}\bigg\{\int_0^T e^{-\delta(t)}\,\,R^{(i)}(X(t),\nu^{(i)}(t))dt -  \int_{[0,T)} e^{-\delta(t)} d\nu^{(i)}(t)\,\bigg\}.
\eeq
Notice that $\mathcal{J}_{i}(\nu^{(i)})$ is the expected total profit, net of investment costs, of firm $i$ when the Social Planner picks $\underline{\nu} \in \mathcal{S}_{\theta}$.

The operating profit functions satisfy the following concavity and regularity assumptions.
\begin{Assumptions}
\label{Assumptionpi} 
\end{Assumptions}
{\em \begin{enumerate}
\item For every $x \in \mathbb{R}$ and $i=1,2,...,N$, the mapping $y \mapsto R^{(i)}(x, y)$ is increasing, strictly concave and with $R^{(i)}(x,0)=0$. Moreover, it has continuous partial derivative $ R^{(i)}_{y}(x,y)$ satisfying the Inada conditions $$\lim_{y\rightarrow 0}R^{(i)}_{y}(x,y)= \infty,\,\,\,\,\,\,\,\,\,\,\lim_{y \rightarrow \infty}R^{(i)}_{y}(x,y)= 0.$$
\item The process $(\omega,t)\mapsto e^{-\delta(\omega,t)}R^{(i)}(X(\omega,t), \theta(\omega,t))$\,\, is \,\, $d\mathbb{P} \otimes dt$-integrable, for $i=1,2,...,N$.
\end{enumerate}}
\noindent Under (\ref{integrabilitavincolo}) and Assumption \ref{Assumptionpi} the net profit $\mathcal{J}_{i}(\nu^{(i)})$  is well defined and finite for all admissible plans.

\section{A Stochastic Kuhn-Tucker Approach}
\label{KTapproachmonopolies}

In this Section we aim to characterize the optimal investment plan by means of a gradient approach.
As in Riedel and Su \cite{RiedelSu}, proof of Theorem $2.6$, by applying a suitable version of Koml\`{o}s' Theorem for optional random measures (cf.\ Kabanov \cite{Kabanov}, Lemma $3.5$) we obtain existence and uniqueness of a solution to problem (\ref{problemaEuropa}). In fact, Koml\`{o}s' Theorem states that if a sequence of random variables $(Z_n)_{n \in \mathbb{N}}$ is bounded from above in expectation, then there exists a subsequence $(Z_{n_k})_{k \in \mathbb{N}}$ which converges in the Ces\`{a}ro sense to some random variable $Z$. In our case the limit provided by Koml\`{o}s' Theorem turns out to be the optimal investment strategy.
\begin{Theorem}
\label{ExistenceandUniqueness}
Under {\rm (\ref{integrabilitavincolo})} and Assumption {\rm \ref{Assumptionpi}}, there exists a unique optimal vector of irreversible investment plans $\underline{\nu}_{*} \in \mathcal{S}_{\theta}$ for problem {\rm (\ref{problemaEuropa})}.
\end{Theorem}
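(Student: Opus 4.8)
The plan is to establish existence via a compactness argument on minimizing sequences and uniqueness via strict concavity.

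\textbf{Existence.} First I would take a maximizing sequence $(\underline{\nu}_n)_{n\in\N} \subseteq \mathcal{S}_\theta$, i.e.\ $\mathcal{J}_{SP}(\underline{\nu}_n) \uparrow V_{SP}$. Each $\nu_n^{(i)}$ is a nondecreasing, left-continuous process bounded above by $\theta$, so it induces a nonnegative optional random measure $d\nu_n^{(i)}$ on $[0,T]$ with total mass $\nu_n^{(i)}(T) - y^{(i)} \leq \theta(T)$; by \eqref{integrabilitavincolo} these masses are bounded in expectation uniformly in $n$. This is exactly the setting of Kabanov's version of Koml\'os' Theorem for optional random measures (\cite{Kabanov}, Lemma $3.5$): applying it componentwise (and passing to a common subsequence for $i=1,\dots,N$), I obtain convex combinations whose Ces\`aro means converge to optional random measures $d\nu_*^{(i)}$, which I identify with nondecreasing left-continuous processes $\nu_*^{(i)}$ satisfying $\nu_*^{(i)}(0) = y^{(i)}$. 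The constraint $\sum_i \nu_*^{(i)}(t) \leq \theta(t)$ passes to the limit because it is preserved under convex combinations and under the weak-type convergence of the measures (testing against continuous functions, using left-continuity to recover the pointwise bound), so $\underline{\nu}_* \in \mathcal{S}_\theta$.

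\textbf{Optimality of the limit.} Since $\mathcal{J}_{SP}$ is concave, the Ces\`aro means $\underline{\nu}_n' := \frac{1}{k_n}\sum_{j} \underline{\nu}_j$ still satisfy $\mathcal{J}_{SP}(\underline{\nu}_n') \to V_{SP}$. It then remains to show $\mathcal{J}_{SP}(\underline{\nu}_*) \geq \limsup_n \mathcal{J}_{SP}(\underline{\nu}_n')$. For the running-profit part, $\int_0^T e^{-\delta(t)} R^{(i)}(X(t),\nu_n'^{(i)}(t))\,dt \to \int_0^T e^{-\delta(t)} R^{(i)}(X(t),\nu_*^{(i)}(t))\,dt$ at the points of convergence, with domination by $e^{-\delta(t)}R^{(i)}(X(t),\theta(t))$ from Assumption \ref{Assumptionpi}(2), so dominated convergence gives convergence of that term. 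For the cost part $-\E\int_{[0,T)} e^{-\delta(t)}\,d\nu_n'^{(i)}(t)$, the map $\mu \mapsto \int e^{-\delta}\,d\mu$ is linear but only lower semicontinuous under the weak convergence of measures (mass can escape to the endpoint or concentrate), hence $\E\int_{[0,T)} e^{-\delta}\,d\nu_*^{(i)} \leq \liminf_n \E\int_{[0,T)} e^{-\delta}\,d\nu_n'^{(i)}$, which is the correct direction for maximizing $-(\,\cdot\,)$. Combining, $\mathcal{J}_{SP}(\underline{\nu}_*) \geq V_{SP}$, so $\underline{\nu}_*$ is optimal.

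\textbf{Uniqueness.} Suppose $\underline{\nu}_*$ and $\underline{\nu}_{**}$ are both optimal. The set $\mathcal{S}_\theta$ is convex, so $\frac{1}{2}(\underline{\nu}_* + \underline{\nu}_{**}) \in \mathcal{S}_\theta$. By Assumption \ref{Assumptionpi}(1) each $y \mapsto R^{(i)}(x,y)$ is strictly concave, while the cost term is linear; hence $\mathcal{J}_{SP}$ is strictly concave along any direction in which some $\nu_*^{(i)}$ and $\nu_{**}^{(i)}$ differ on a set of positive $d\P\otimes dt$ measure, giving $\mathcal{J}_{SP}\big(\tfrac{1}{2}(\underline{\nu}_* + \underline{\nu}_{**})\big) > V_{SP}$, a contradiction. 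Therefore $\nu_*^{(i)}(t) = \nu_{**}^{(i)}(t)$ for $d\P\otimes dt$-a.e.\ $(\omega,t)$, and left-continuity upgrades this to indistinguishability. The main obstacle is the existence half: verifying that the hypotheses of Kabanov's Koml\'os-type lemma are met by the measures $d\nu_n^{(i)}$, that the admissibility constraint \eqref{constraint} genuinely survives the Ces\`aro limit, and that the cost functional is lower semicontinuous in the relevant topology so the inequality points the right way; the uniqueness half is then essentially immediate from strict concavity.
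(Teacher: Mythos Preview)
Your proposal is correct and follows essentially the same route as the paper: a maximizing sequence in $\mathcal{S}_\theta$, Kabanov's Koml\'os-type lemma (\cite{Kabanov}, Lemma~3.5) to extract Ces\`aro-convergent optional measures, convexity of $\mathcal{S}_\theta$ to keep the limit admissible, concavity of $\mathcal{J}_{SP}$ to make the Ces\`aro means a maximizing sequence, and strict concavity for uniqueness. Your treatment is in fact slightly more explicit than the paper's in one respect: you separate the running-profit term (handled by dominated convergence with the majorant from Assumption~\ref{Assumptionpi}(2)) from the cost term and argue lower semicontinuity of $\mu\mapsto\int e^{-\delta}\,d\mu$ so that the inequality points the right way, whereas the paper compresses both into a single appeal to Jensen and dominated convergence.
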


\begin{proof}

Let $\underline{\nu} \in \mathcal{S}_{\theta}$ and denote by $\mathcal{H}$ the space of optional measures on $[0,T]$. Then, the investment strategies $\nu^{(i)}$ may be regarded as elements of $\mathcal{H}$, hence $\mathcal{S}_{\theta}\subset \mathcal{H}^{N}$.

Let $(\underline{\nu}_n)_{n \in \mathbb{N}}$ be a maximizing sequence of investment plans in $\mathcal{S}_{\theta}$, i.e.\ a sequence such that $\displaystyle \lim_{n \rightarrow \infty} \mathcal{J}_{SP}(\underline{\nu}_n) = V_{SP}$. 
By (\ref{integrabilitavincolo}) we have that the sequence $(\mathbb{E}\{\nu^{(i)}_n(T)\})_{n \in \mathbb{N}}$ is bounded for $i=1,2,...,N$; in fact,
$\mathbb{E}\{\nu^{(i)}_n(T)\}\leq \mathbb{E}\left\{\theta(T)\right\}<\infty.$
By a version of Koml\`{o}s' Theorem for optional measures (cf.\ Kabanov \cite{Kabanov}, Lemma $3.5$), there exists a subsequence $(\hat{\underline{\nu}}_n)_{n \in \mathbb{N}}$ that converges weakly a.s.\ in the Ces\`{a}ro sense to some random vector $\underline{\nu}_{*} \in \mathcal{H}^{N}$. That is, for $i=1,2,...,N$, we have, almost surely, 
\beq
\label{CesaroSP}
\hat{I}^{(i)}_n(t) := \frac{1}{n}\sum_{j=0}^{n}\hat{\nu}^{(i)}_j(t) \rightarrow \nu^{(i)}_{*}(t),\quad \mbox{as}\quad n \rightarrow \infty,
\eeq
for every point of continuity of $\nu^{(i)}_{*}$, $i=1,2,...,N$.
Notice that $\hat{\underline{\nu}}_n \in \mathcal{S}_{\theta}$ for all $n$ implies that also the Ces\`{a}ro sequence $\hat{\underline{I}}_n$ belongs to $\mathcal{S}_{\theta}$ due to the convexity of $\mathcal{S}_{\theta}$, hence 
$\sum_{i=1}^N \hat{I}^{(i)}_n(t) \leq \theta(t),$ for $n \in \mathbb{N}$.
It follows that, almost surely,
\beq
\label{weakly3}
\sum_{i=1}^N \nu^{(i)}_{*}(t) \leq \theta(t),
\eeq
which means $\underline{\nu}_{*} \in \mathcal{S}_{\theta}$.

Since $(\nu^{(i)}_n)_{n \in \mathbb{N}}$ is a maximizing sequence so is $(\hat{I}^{(i)}_n)_{n \in \mathbb{N}}$ by concavity of the profit functional. Then, applying Jensen inequality and using Assumption \ref{Assumptionpi}, we have
\beq
\label{stimafinaleSP}
\mathcal{J}_{SP}(\underline{\nu}_{*}) \geq \lim_{n \rightarrow \infty}\, \frac{1}{n}\sum_{j=0}^{n}\mathcal{J}_{SP}(\hat{\underline{\nu}}_n) = V_{SP},
\eeq
by dominated convergence theorem. 
Finally, uniqueness follows from the strict concavity of the Social Planner profit functional.
\end{proof}

We now aim to characterize the Social Planner optimal policy as the unique solution of a set of first order generalized stochastic Kuhn-Tucker conditions.
Notice that the strictly concave functionals $\mathcal{J}_i$, $i=1,2,...,N$, admit the supergradient
\beq
\label{additive3}
\nabla_{y}\mathcal{J}_{i}(\nu^{(i)})(t) :=\mathbb{E}\bigg\{\,\int_{t}^T e^{-\delta(s)}\,R^{(i)}_{y}(X(s),\nu^{(i)}(s)) \,ds\,\Big|\,\mathcal{F}_{t}\,\bigg\}-e^{-\delta(t)}\mathds{1}_{\{t < T\}},
\eeq
for $t \in [0,T]$, in the sense that we have
$$ \mathcal{J}_{i}(\mu^{(i)})- \mathcal{J}_{i}(\nu^{(i)}) \le \langle \nabla_{y}\mathcal{J}_{i}(\nu^{(i)}),\, \mu^{(i)} - \nu^{(i)} \rangle $$ for all admissible investment plans $\mu^{(i)}, \nu^{(i)} \in \mathcal{S}_{\theta}$.

\begin{Remark}
\label{optionalsupergradient}
The quantity $\nabla_{y}\mathcal{J}_i(\nu^{(i)})(t)$, $i=1,2,...,N$, may be interpreted as the marginal expected profit resulting from an additional infinitesimal investment at time $t$ when the investment plan is $\nu^{(i)}$.
Mathematically, $\nabla_{y}\mathcal{J}_i(\nu^{(i)})$ is the Riesz representation of the profit gradient at $\nu^{(i)}$.
More precisely, define $\nabla_{y}\mathcal{J}_i(\nu^{(i)})$ as the optional projection of the product-measurable process
\beq
\label{progrmeas}
\Phi_i(\omega,t):= \int_{t}^T e^{-\delta(\omega,s)}\,R^{(i)}_{y}(X(\omega,s),\nu^{(i)}(\omega,s))\,ds\, - \,e^{-\delta(\omega,t)}\mathds{1}_{\{t < T\}}, 
\eeq
for $\omega \in \Omega$ and $t \in [0,T]$. Hence $\nabla_{y}\mathcal{J}_i(\nu^{(i)})$ is uniquely determined up to $\mathbb{P}$-indistinguishability and it holds
$$\mathbb{E}\bigg\{\,\int_{[0,T)} \nabla_{y}\mathcal{J}_i(\nu^{(i)})(t)d\nu^{(i)}(t)\bigg\} = \mathbb{E}\bigg\{\,\int_{[0,T)}\Phi_i(t) d\nu^{(i)}(t)\bigg\}$$
for all admissible $\nu^{(i)}$ (cf.\ Jacod \cite{Jacod}, Theorem 1.33).
\end{Remark}

%%%%%%%%%%%%%%%%%%%%%%%%%%%%%%%%%%%%%%%%%%%%%%%%%%%%%%%%%%%%%%%%%%%%%%%%%%%%%%%%%%%%%%%%%%%%%%%%%%%%%%%%%%%%%%%%%%%%%%%%%%%%%%%%%%%%%%%%%%%%%%%%%%%%%%%%%%%%%%%%%%%%%%%%%%%%%%%%%%%%%%%%%%%%%%%%%%%%%%%%%%%%%%%%%%%%%%%%%%%%%%%%%%%%%

\subsection{Generalized Stochastic Kuhn-Tucker Conditions}

Let $\mathcal{B}[0,T]$ denote the Borel $\sigma$-algebra on $[0,T]$. Recall that if $\beta$ is a right-continuous, adapted and nondecreasing process, then the bracket operator 
\beq
\label{scalarproduct}
\langle \alpha, \beta \rangle = \mathbb{E}\bigg\{\,\int_{[0,T)} \alpha(t)\, d\beta(t)\, \bigg\}
\eeq
is well defined (possibly infinite) for all processes $\alpha$ which are nonnegative and $\mathcal{F}_T \otimes \mathcal{B}[0,T]$-measurable. Notice that the bracket is preserved when we pass from $\alpha$ to its optional projection $\alpha^{(o)}$ (cf.\ Jacod \cite{Jacod}, Theorem $1.33$); that is
\beq
\label{scalarproduct2}
\langle \alpha, \beta \rangle = \langle \alpha^{(o)}, \beta \rangle.
\eeq

Since the constraint is $\theta(t) - \sum_{i=1}^N \nu^{(i)}(t) \geq 0$, $\mathbb{P}$-a.s.\ for all $t \in [0,T]$ (cf.\ (\ref{constraint})), we define the \textsl{Lagrangian functional} of problem (\ref{problemaEuropa}) as
\begin{eqnarray}
\label{Lagrangian}
\mathcal{L}^{\theta}(\underline{\nu},\lambda)&\hspace{-0.25cm}=\hspace{-0.25cm}& \mathcal{J}_{SP}(\underline{\nu}) + \langle \theta - \sum_{i=1}^N \nu^{(i)}, \lambda \rangle \nonumber \\
&\hspace{-0.25cm}=\hspace{-0.25cm}&\sum_{i=1}^N \mathbb{E}\bigg\{\int_0^T e^{-\delta(t)}\,\,R^{(i)}(X(t),\nu^{(i)}(t))dt - \int_{[0,T)} e^{-\delta(t)} d\nu^{(i)}(t)\bigg\}  \\
&&\hspace{0.5cm}+\,\mathbb{E}\bigg\{\,\int_{[0,T)} \Big[ \theta(t) - \sum_{i=1}^N \nu^{(i)}(t) \Big]d\lambda(t)\, \bigg\}, \nonumber
\end{eqnarray}
where $d\lambda(\omega, t)$ is a nonnegative optional measure, which may be interpreted as the Lagrange multiplier of Social Planner problem $(\ref{problemaEuropa})$.
By using Fubini's Theorem we write the bracket $\langle \theta - \sum_{i=1}^N \nu^{(i)}, \lambda \rangle$ in a  more convenient form, that is

\begin{eqnarray}
\label{scalarLagrange}
\lefteqn{\langle \theta - \sum_{i=1}^N \nu^{(i)}, \lambda \rangle = \mathbb{E}\bigg\{\,\int_{[0,T)} \big[ \theta(t) - \sum_{i=1}^N \nu^{(i)}(t) \big] d\lambda(t)\, \bigg\}} \nonumber \\
&\hspace{-0.25cm}=\hspace{-0.25cm}&\mathbb{E}\bigg\{\,\int_{[0,T)} \Big[\int_{[0,t)} \big( d\theta(s) - \sum_{i=1}^N d\nu^{(i)}(s) \big)\Big] d\lambda(t) \, \bigg\} +\,K\,\mathbb{E}\bigg\{\,\int_{[0,T)} d\lambda(t) \bigg\}  \nonumber \\
&\hspace{-0.25cm}=\hspace{-0.25cm}&\mathbb{E}\bigg\{\,\int_{[0,T)} \Big[\int_{[t,T)} d\lambda(s)\Big]( d\theta(t) - \sum_{i=1}^N d\nu^{(i)}(t))\, \bigg\} +\,K\,\mathbb{E}\bigg\{\,\int_{[0,T)} d\lambda(t) \bigg\},  \nonumber 
\end{eqnarray}
where $K:=\theta_o - \sum_{i=1}^N y^{(i)} = \theta(0) - \sum_{i=1}^N \nu^{(i)}(0) $.
Hence
\begin{eqnarray}
\label{Lagrangian2}
\lefteqn{\mathcal{L}^{\theta}(\underline{\nu}, \lambda) = \mathcal{J}_{SP}(\underline{\nu}) + \langle \theta - \sum_{i=1}^N \nu^{(i)}, \lambda \rangle }\nonumber \\
&\hspace{-0.25cm}=\hspace{-0.25cm}& \sum_{i=1}^N \mathbb{E}\bigg\{\int_0^T e^{-\delta(t)}\,\,R^{(i)}(X(t),\nu^{(i)}(t))dt - \int_{[0,T)} e^{-\delta(t)} d\nu^{(i)}(t)\bigg\} \nonumber \\
& &+\,\mathbb{E}\bigg\{\,\int_{[0,T)} \Big[\int_{[t,T)} d\lambda(s)\Big] \big( d\theta(t) - \sum_{i=1}^N d\nu^{(i)}(t)  \big)\, \bigg\} +\, K\,\mathbb{E}\bigg\{\,\int_{[0,T)} d\lambda(t) \bigg\}. \nonumber 
\end{eqnarray}

We now obtain stochastic Kuhn-Tucker conditions for optimality with a stochastic Lagrange multiplier process that takes care of our dynamic resource constraint.
A similar approach may be found in Bank and Riedel \cite{BankRiedel1} for an intertemporal utility maximization problem under a static budget constraint, with Hindy, Huang and Kreps preferences. From now on, $\mathcal{T}$ denotes the set of all stopping times $\tau$ with values in $[0,T]$, $\mathbb{P}$-a.s.     
\begin{Theorem}
\label{KTEuropa}
If there exists a nonnegative Lagrange multiplier measure $d\lambda(\omega,t)$ such that $\mathbb{E}\{\,\int_{[0,T)} d\lambda(t)\} < \infty$, and the following generalized stochastic Kuhn-Tucker conditions hold true, for $i=1,2,...,N$, for an admissible investment vector $\underline{\nu}_{*}$
\beq
\label{KT1}
\left\{
\begin{array}{ll}
\displaystyle \nabla_{y}\mathcal{J}_i(\nu^{(i)}_{*})(\tau) \leq \mathbb{E}\bigg\{\,\int_{[\tau,T)} d\lambda(s) \,\Big | \mathcal{F}_{\tau}\bigg\},\,\,\,\,\,\,\,\,\,\mathbb{P}\text{-a.s.},\,\,\forall \tau \in \mathcal{T},\\ \\
\displaystyle \int_{[0,T)} \bigg[\nabla_{y}\mathcal{J}_i(\nu^{(i)}_{*})(t) - \mathbb{E}\bigg\{\,\int_{[t,T)} d\lambda(s)\,\Big | \mathcal{F}_{t}\bigg\}\bigg] d\nu^{(i)}_{*}(t) = 0,\,\,\,\,\,\,\,\,\,\mathbb{P}\text{-a.s.},\\ \\
\displaystyle \mathbb{E}\bigg\{\,\int_{[0,T)}  \Big[\theta(t) - \sum_{i=1}^N \nu^{(i)}_{*}(t) \Big] d\lambda(t)\bigg\} = 0,
\end{array}
\right.
\eeq
then $\underline{\nu}_{*}$ is the unique solution of the Social Planner problem (\ref{problemaEuropa}).
\end{Theorem}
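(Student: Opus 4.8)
The plan is to show that the Kuhn-Tucker conditions \eqref{KT1} imply that $\underline{\nu}_{*}$ maximizes the Lagrangian $\mathcal{L}^{\theta}(\cdot, \lambda)$ over all of $\mathcal{S}_{\theta}$, and then to use the nonnegativity of $\lambda$ together with the third (complementary slackness) condition to conclude that $\underline{\nu}_{*}$ solves the original constrained problem \eqref{problemaEuropa}. Uniqueness is already guaranteed by Theorem~\ref{ExistenceandUniqueness}, so I only need to verify optimality.

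\textbf{Key steps.} First I would fix an arbitrary $\underline{\nu} \in \mathcal{S}_{\theta}$ and estimate $\mathcal{J}_{SP}(\underline{\nu}) - \mathcal{J}_{SP}(\underline{\nu}_{*})$ from above. Using the supergradient inequality $\mathcal{J}_i(\nu^{(i)}) - \mathcal{J}_i(\nu^{(i)}_{*}) \leq \langle \nabla_y \mathcal{J}_i(\nu^{(i)}_{*}),\, \nu^{(i)} - \nu^{(i)}_{*}\rangle$ from the discussion after \eqref{additive3}, and summing over $i$, I get
\beq
\mathcal{J}_{SP}(\underline{\nu}) - \mathcal{J}_{SP}(\underline{\nu}_{*}) \leq \sum_{i=1}^N \mathbb{E}\bigg\{\int_{[0,T)} \nabla_y \mathcal{J}_i(\nu^{(i)}_{*})(t)\, d(\nu^{(i)} - \nu^{(i)}_{*})(t)\bigg\}.
\eeq
Next I would replace $\nabla_y \mathcal{J}_i(\nu^{(i)}_{*})(t)$ inside the integral against $d\nu^{(i)}$ by $\mathbb{E}\{\int_{[t,T)} d\lambda(s) \mid \mathcal{F}_t\}$: on the $d\nu^{(i)}$ term this only increases the right-hand side by the first KT condition (the supergradient is dominated by the conditional $\lambda$-tail at every stopping time, hence, by the optional section theorem, $d\nu^{(i)}$-a.e.), while on the $d\nu^{(i)}_{*}$ term the second KT condition gives exact equality. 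Then, using the tower property to rewrite $\mathbb{E}\{\int_{[0,T)} \mathbb{E}\{\int_{[t,T)} d\lambda(s)\mid\mathcal{F}_t\}\, d\mu^{(i)}(t)\} = \mathbb{E}\{\int_{[0,T)}[\int_{[t,T)} d\lambda(s)]\, d\mu^{(i)}(t)\}$ for any admissible $\mu^{(i)}$, and applying the Fubini manipulation already carried out before \eqref{Lagrangian2}, I arrive at
\beq
\mathcal{J}_{SP}(\underline{\nu}) - \mathcal{J}_{SP}(\underline{\nu}_{*}) \leq \mathbb{E}\bigg\{\int_{[0,T)}\Big[\theta(t) - \sum_{i=1}^N \nu^{(i)}_{*}(t)\Big] d\lambda(t)\bigg\} - \mathbb{E}\bigg\{\int_{[0,T)}\Big[\theta(t) - \sum_{i=1}^N \nu^{(i)}(t)\Big] d\lambda(t)\bigg\}.
\eeq
Finally, the third KT condition kills the first term, and since $\lambda$ is a nonnegative measure and $\theta(t) - \sum_i \nu^{(i)}(t) \geq 0$ for the admissible $\underline{\nu}$, the second term is $\leq 0$, so $\mathcal{J}_{SP}(\underline{\nu}) \leq \mathcal{J}_{SP}(\underline{\nu}_{*})$ for all $\underline{\nu} \in \mathcal{S}_{\theta}$; uniqueness then follows from strict concavity (or from Theorem~\ref{ExistenceandUniqueness}).

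\textbf{Main obstacle.} The delicate point is the passage from the first KT condition, stated for every stopping time $\tau$, to the $d\nu^{(i)}_{*}$- and $d\nu^{(i)}$-a.e.\ inequality needed to manipulate the Lebesgue--Stieltjes integrals; this requires that both $\nabla_y \mathcal{J}_i(\nu^{(i)}_{*})$ and the $\lambda$-tail process $t \mapsto \mathbb{E}\{\int_{[t,T)} d\lambda(s)\mid\mathcal{F}_t\}$ be taken in their optional projections so that the optional section theorem applies, together with care that all the interchanges of $\mathbb{E}$, conditional expectations, and $d\nu^{(i)}$- or $d\lambda$-integration are justified by the integrability hypotheses \eqref{integrabilitavincolo}, \eqref{costolimitato}, Assumption~\ref{Assumptionpi}, and $\mathbb{E}\{\int_{[0,T)} d\lambda\} < \infty$ via Fubini and Jacod's Theorem~1.33. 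A secondary subtlety is making sure the left-continuity conventions at $t = T$ (the $\mathds{1}_{\{t<T\}}$ in \eqref{additive3} and the half-open intervals $[0,T)$, $[t,T)$) are handled consistently throughout.
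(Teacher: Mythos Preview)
Your proposal is correct and follows essentially the same route as the paper: bound $\mathcal{J}_{SP}(\underline{\nu})-\mathcal{J}_{SP}(\underline{\nu}_{*})$ via the supergradient inequality, replace $\nabla_y\mathcal{J}_i(\nu^{(i)}_{*})$ by the conditional $\lambda$-tail using the first two KT conditions, then apply Fubini together with the third KT condition and admissibility of $\underline{\nu}$ to conclude. If anything, you are more explicit than the paper about the passage from the stopping-time inequality to the $d\nu^{(i)}$-a.e.\ inequality via optional section, which the paper leaves implicit.
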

\begin{proof}
Let $\underline{\nu}_{*}$ satisfy the first order Kuhn-Tucker conditions (\ref{KT1}) and let $\underline{\nu}$ be an arbitrary admissible plan.
By concavity of $R^{(i)}(x , \cdot)$, $i=1,2,...,N$, and Fubini's Theorem we have
\begin{eqnarray}
\label{Sufficiency1}
\mathcal{J}_{SP}(\underline{\nu}_{*}) - \mathcal{J}_{SP}(\underline{\nu}) &\hspace{-0.25cm} = \hspace{-0.25cm}& \sum_{i=1}^{N}\mathbb{E}\bigg\{\,\int_0^T e^{- \delta(t)} \Big[R^{(i)}(X(t), \nu^{(i)}_{*}(t)) - R^{(i)}(X(t), \nu^{(i)}(t))\,\Big]dt\, \nonumber \\ 
& &\hspace{3cm} - \int_{[0,T)} e^{-\delta(t)} d(\nu^{(i)}_{*}(t) - \nu^{(i)}(t))\bigg\} \nonumber \\
&\hspace{-0.25cm} \geq \hspace{-0.25cm} &   \sum_{i=1}^N \mathbb{E}\bigg\{\,\int_0^T e^{- \delta t} R^{(i)}_{y}(X(t),\nu^{(i)}_{*}(t))\,(\nu^{(i)}_{*}(t) - \nu^{(i)}(t))\,dt \nonumber  \\
& & \hspace{3cm}- \int_{[0,T)} e^{-\delta(t)}  d(\nu^{(i)}_{*}(t) - \nu^{(i)}(t))\bigg\} \\
&\hspace{-0.25cm} = \hspace{-0.25cm}& \sum_{i=1}^N\mathbb{E}\bigg\{\,\int_{[0,T)} \int_{t}^{T} e^{-\delta(s)}R^{(i)}_{y}(X(s),\nu^{(i)}_{*}(s))\,ds\,d(\nu^{(i)}_{*}(t) - \nu^{(i)}(t))\,  \nonumber  \\
& &\hspace{3cm} - \int_{[0,T)} e^{-\delta(t)} d(\nu^{(i)}_{*}(t) - \nu^{(i)}(t))\,\bigg\}\, \nonumber \\
&\hspace{-0.25cm}  = \hspace{-0.25cm}&\sum_{i=1}^N\mathbb{E}\bigg\{\,\int_{[0,T)} \nabla_{y}\mathcal{J}_{i}(\nu^{(i)}_{*})(t)\,d(\nu^{(i)}_{*}(t) - \nu^{(i)}(t))\,\bigg\}, \nonumber 
\end{eqnarray}
where we have used Remark \ref{optionalsupergradient} for the last equality.
Now (\ref{KT1}) implies
\begin{eqnarray}
\label{Sufficiency2}
\mathcal{J}_{SP}(\underline{\nu}_{*}) - \mathcal{J}_{SP}(\underline{\nu})  &\hspace{-0.25cm} \geq \hspace{-0.25cm}& \sum_{i=1}^N\mathbb{E}\bigg\{\,\int_{[0,T)} \nabla_{y}\mathcal{J}_{i}(\nu^{(i)}_{*})(t)\,d(\nu^{(i)}_{*}(t) - \nu^{(i)}(t))\,\bigg\} \nonumber \\
&\hspace{-0.25cm} \geq \hspace{-0.25cm}& \sum_{i=1}^N \mathbb{E}\bigg\{\,\int_{[0,T)}  \mathbb{E}\bigg\{\,\int_{[t,T)} d\lambda(s) \Big| \mathcal{F}_t\bigg\}\,d(\nu^{(i)}_{*}(t) - \nu^{(i)}(t))\,\bigg\} \\
&\hspace{-0.25cm} =  \hspace{-0.25cm}&  \sum_{i=1}^N \mathbb{E}\bigg\{\,\int_{[0,T)} \Big[\int_{[t,T)} d\lambda(s)\Big] d(\nu^{(i)}_{*}(t) - \nu^{(i)}(t))\bigg\},\nonumber 
\end{eqnarray}
and the nonnegativity of $d\lambda(t)$, the admissibility of $\underline{\nu}$, and another application of Fubini's Theorem give
\begin{eqnarray}
\label{Sufficiency3}
\mathcal{J}_{SP}(\underline{\nu}_{*}) - \mathcal{J}_{SP}(\underline{\nu}) &\hspace{-0.25cm}\geq\hspace{-0.25cm}& \sum_{i=1}^N \mathbb{E}\bigg\{\,\int_{[0,T)} \Big[\int_{[t,T)} d\lambda(s)\Big] d(\nu^{(i)}_{*}(t) - \nu^{(i)}(t))\bigg\}   \nonumber \\
&\hspace{-0.25cm} = \hspace{-0.25cm}&\mathbb{E}\bigg\{\,\int_{[0,T)} \sum_{i=1}^N\Big[\nu^{(i)}_{*}(t) - \nu^{(i)}(t)\Big] d\lambda(t) \bigg\} \nonumber \\
&\hspace{-0.25cm} = \hspace{-0.25cm}& \mathbb{E}\bigg\{\,\int_{[0,T)} \Big[\theta(t) - \sum_{i=1}^N \nu^{(i)}(t)\Big] d\lambda(t) \bigg\} \geq 0, \nonumber 
\end{eqnarray}
where the last line follows from (\ref{KT1}), third condition.
\end{proof}

Conditions (\ref{KT1}) are also necessary for optimality under the assumption that
\beq
\label{assnecessity}
\omega \mapsto \theta(\omega,T)\int_0^T e^{-\delta(t)}R^{(i)}(X(\omega,t),\theta(\omega,T))dt\quad \mbox{is}\quad \mathbb{P}\mbox{-integrable},\quad i=1,2,...,N.
\eeq

\begin{Theorem}
\label{thmnecessity}
Assume (\ref{assnecessity}). If $\underline{\nu}_{*}$ is optimal for the Social Planner problem (\ref{problemaEuropa}), then it satisfies the Kuhn-Tucker conditions (\ref{KT1}) for some nonnegative Lagrange multiplier $d\lambda(\omega,t)$ such that $\mathbb{E}\{\,\int_{[0,T)} d\lambda(t)\} < \infty$.
\end{Theorem}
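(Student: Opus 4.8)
The plan is to regard problem (\ref{problemaEuropa}) as a concave maximization over the convex set $\mathcal{S}_{\theta}$, which is carved out of the space $\mathcal{H}^{N}$ of $N$-tuples of optional random measures by the single linear inequality $\theta-\sum_{i=1}^{N}\nu^{(i)}\ge0$, and to obtain $d\lambda$ by Lagrangian duality: once we know that the duality associated with the Lagrangian $\mathcal{L}^{\theta}$ of (\ref{Lagrangian}) has no gap and that its dual optimum is attained at a nonnegative optional measure $d\lambda$ with $\mathbb{E}\{\int_{[0,T)}d\lambda\}<\infty$, all three conditions in (\ref{KT1}) drop out of the resulting saddle-point identity.

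I would first record the primal first-order condition. The optimizer $\underline{\nu}_{*}$ exists by Theorem \ref{ExistenceandUniqueness}, and for every $\underline{\nu}\in\mathcal{S}_{\theta}$ the map $\varepsilon\mapsto\mathcal{J}_{SP}(\underline{\nu}_{*}+\varepsilon(\underline{\nu}-\underline{\nu}_{*}))$ on $[0,1]$ is concave with maximum at $\varepsilon=0$. Differentiating under the expectation and the $dt$-integral — legitimate by Assumption \ref{Assumptionpi} and (\ref{assnecessity}), whose extra factor $\theta(T)$ supplies the domination near the Inada singularity — shows that $\nabla_{y}\mathcal{J}_{i}$ is in fact the G\^{a}teaux derivative of $\mathcal{J}_{i}$ and yields
\[
\textstyle\sum_{i=1}^{N}\langle\nabla_{y}\mathcal{J}_{i}(\nu^{(i)}_{*}),\,\nu^{(i)}-\nu^{(i)}_{*}\rangle\le0\qquad\text{for all }\underline{\nu}\in\mathcal{S}_{\theta}.
\]
The theorem is exactly the statement that this coupled inequality can be decoupled firm by firm at the cost of a shadow-price measure.

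The heart of the argument is the construction of $d\lambda$. Introduce the value of the relaxed problem
\[
\mathcal{V}(\eta):=\sup\Big\{\mathcal{J}_{SP}(\underline{\nu})\ :\ \underline{\nu}\ \text{monotone, left-continuous, adapted},\ \nu^{(i)}(0)=y^{(i)},\ \textstyle\sum_{i=1}^{N}\nu^{(i)}(t)\le\theta(t)+\eta(t)\ \forall t\Big\},
\]
with $\eta$ ranging over a suitable space of optional processes; $\mathcal{V}(0)=V_{SP}<\infty$ by Assumption \ref{Assumptionpi}, the supremum is attained by the Ces\`{a}ro/Koml\'{o}s compactness argument of Theorem \ref{ExistenceandUniqueness}, and $\mathcal{V}$ is concave and nondecreasing. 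One then shows that $\mathcal{V}$ admits a supergradient at $\eta=0$, i.e.\ a nonnegative linear functional $\ell$ with $\mathcal{V}(\eta)\le V_{SP}+\ell(\eta)$, represented by integration against a genuinely countably additive nonnegative optional random measure $d\lambda$; applied with $\eta=\sum_{i}\nu^{(i)}-\theta$ this gives $\mathcal{L}^{\theta}(\underline{\nu},\lambda)\le V_{SP}$ for every monotone adapted plan $\underline{\nu}$, i.e.\ no duality gap. Feasibility and optimality of $\underline{\nu}_{*}$ then force, via $\mathcal{L}^{\theta}(\underline{\nu}_{*},\lambda)\ge V_{SP}=\sup_{\underline{\nu}}\mathcal{L}^{\theta}(\underline{\nu},\lambda)\ge\mathcal{L}^{\theta}(\underline{\nu}_{*},\lambda)$, both the complementary-slackness identity $\langle\theta-\sum_{i}\nu^{(i)}_{*},\lambda\rangle=0$ — the third line of (\ref{KT1}) — and the fact that $\underline{\nu}_{*}$ maximizes $\mathcal{L}^{\theta}(\cdot,\lambda)$ over all monotone, left-continuous, adapted plans with the prescribed initial values. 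Rewriting $\mathcal{L}^{\theta}$ in the Fubini form (\ref{Lagrangian2}), this last maximization separates across firms with per-firm supergradient $\nabla_{y}\mathcal{J}_{i}(\nu^{(i)})(t)-\mathbb{E}\{\int_{[t,T)}d\lambda(s)\mid\mathcal{F}_{t}\}$, so the standard first-order characterization of resource-free monotone-follower problems applies: perturbing $\nu^{(i)}_{*}$ by $+\varepsilon\mathds{1}_{[\tau,T)}$ restricted to an event of $\mathcal{F}_{\tau}$ and letting $\varepsilon\downarrow0$ gives the first inequality of (\ref{KT1}) at every $\tau\in\mathcal{T}$, perturbing $\nu^{(i)}_{*}$ towards $y^{(i)}$ gives the opposite integrated inequality, and the two together force the flatness identity in the second line — this is the argument of Riedel and Su \cite{RiedelSu}. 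Equivalently, and more in the spirit of the Introduction, $d\lambda$ can be built directly as the compensator in a Doob--Meyer decomposition of a Snell envelope of the supergradients $\nabla_{y}\mathcal{J}_{i}(\nu^{(i)}_{*})$, which is what makes its identification with Bank's \cite{Bank} $dA(\nu_{*})$ transparent.

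The main obstacle is establishing the existence of the supergradient $\ell$ as a $\sigma$-additive optional measure of finite expectation. This is a constraint-qualification point: one needs $0$ to lie in a suitable relative interior of $\mathrm{dom}\,\mathcal{V}$, which can fail precisely in the degenerate case $K=\theta_{o}-\sum_{i}y^{(i)}=0$, when all available fuel is already committed at time $0$. The remedy is to regularize, replacing $\theta$ by $\theta+\epsilon$, solve the now strictly feasible problem to obtain multipliers $d\lambda_{\epsilon}$, control $\mathbb{E}\{\int_{[0,T)}d\lambda_{\epsilon}\}$ by means of (\ref{assnecessity}) (using $R^{(i)}_{y}(x,y)\,y\le R^{(i)}(x,y)$ together with $\nu^{(i)}_{*}\le\theta$ to turn the first-order relations into the a priori bound), and pass to the limit $\epsilon\downarrow0$ along a Koml\'{o}s-type subsequence as before; the conditional-expectation inequalities in (\ref{KT1}) are stable under this passage, and working throughout in the space of optional random measures is what keeps the limit from being merely finitely additive. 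The remaining items — the Fubini interchanges and differentiation under the integral near the Inada singularity — are routine consequences of (\ref{assnecessity}).
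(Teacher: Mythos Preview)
Your primary route --- perturbing the constraint, introducing $\mathcal{V}(\eta)$, and invoking a supergradient represented by a countably additive optional measure --- is \emph{not} the paper's route, and the step you yourself flag as ``the main obstacle'' is a genuine gap. In the infinite-dimensional setting at hand, the abstract dual of the cone of optional processes contains singular (finitely additive) components; the concavity and monotonicity of $\mathcal{V}$ give you at best a Yosida--Hewitt-type element, and nothing in the regularize-then-Koml\'os sketch forces the limit to stay countably additive or to preserve the conditional-expectation inequalities in (\ref{KT1}). Asserting that ``working in the space of optional random measures keeps the limit from being merely finitely additive'' is circular: that space is not closed under the weak limits your argument needs.

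The paper avoids this entirely by taking the route you mention only in your last sentence, and it does so with a concrete twist you do not record: it defines a \emph{single} process
\[
\Psi(t)\;:=\;\esssup_{\tau\in[t,T]}\,\mathbb{E}\Big\{\max_{1\le i\le N}\nabla_{y}\mathcal{J}_{i}(\nu^{(i)}_{*})(\tau)\,\Big|\,\mathcal{F}_{t}\Big\},
\]
i.e.\ the Snell envelope of the \emph{maximum} of the $N$ supergradients, and takes $d\lambda$ to be the compensator in its Doob--Meyer decomposition; this automatically yields a genuine nonnegative predictable (hence optional) measure with $\mathbb{E}\{\lambda(T)\}<\infty$. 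The first inequality of (\ref{KT1}) is then immediate from $\nabla_{y}\mathcal{J}_{i}(\nu^{(i)}_{*})\le\Psi$ and $\Psi(t)=\mathbb{E}\{\int_{[t,T)}d\lambda\mid\mathcal{F}_{t}\}$. The remaining two conditions come, as in Bank \cite{Bank}, Lemma~2.5, from showing that the optimal $\underline{\nu}_{*}$ also solves the \emph{linearized} problem $\sup_{\underline{\nu}\in\mathcal{S}_{\theta}}\sum_{i}\langle\nabla_{y}\mathcal{J}_{i}(\nu^{(i)}_{*}),\nu^{(i)}\rangle$ --- this is your variational inequality, proved carefully via Fatou with (\ref{assnecessity}) supplying the dominating function --- and then observing the chain
\[
\sum_{i}\langle\nabla_{y}\mathcal{J}_{i}(\nu^{(i)}_{*}),\nu^{(i)}\rangle
\;\le\;\sum_{i}\langle\Psi,\nu^{(i)}\rangle
\;=\;\sum_{i}\langle\nu^{(i)},\lambda\rangle
\;\le\;\langle\theta,\lambda\rangle,
\]
with equality throughout attained by an explicit sequence $\nu^{(i),k}$ built from stopping times at which the $i$-th supergradient touches $\Psi$. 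Equality at $\underline{\nu}_{*}$ then forces the two flat-off conditions in (\ref{KT1}). The abstract duality machinery never enters, and the constraint-qualification issue you worry about does not arise.
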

\begin{proof}
The proof splits into two steps. The arguments resemble those of the finite-dimensional Kuhn-Tucker Theorem. Let $\underline{\nu}_{*}$ be optimal for problem (\ref{problemaEuropa}). \vspace{0.25cm}

\textbf{Step 1.}\quad We show that under (\ref{assnecessity}) the optimal policy $\underline{\nu}_{*}$ solves the linearized problem with finite value
\beq
\label{LP}
\sup_{\underline{\nu} \in \mathcal{S}_{\theta}} \,\sum_{i=1}^N\mathbb{E}\bigg\{\,\int_{[0,T)} \Phi_i^{*}(s) d\nu^{(i)}(s)\bigg\} = \sup_{\underline{\nu} \in \mathcal{S}_{\theta}} \,\sum_{i=1}^N\mathbb{E}\bigg\{\,\int_{[0,T)} \nabla_{y}\mathcal{J}_i(\nu^{(i)}_{*})(s) d\nu^{(i)}(s)\bigg\},
\eeq
by Remark \ref{optionalsupergradient} being $\Phi_i^{*}$, as defined in (\ref{progrmeas}), the product-measurable process associated to $\nabla_{y}\mathcal{J}_i(\nu^{(i)}_{*})$, $i=1,2,...,N$.
In fact, let $\underline{\nu}$ be an admissible plan and fix $\epsilon \in (0,1)$.
For $i=1,2,...,N$, define $\nu^{(i)}_{\epsilon}:= \epsilon \nu^{(i)} + (1-\epsilon) \nu^{(i)}_{*}$, and let $\Phi_i^{\epsilon}$ be the product-measurable process defined in (\ref{progrmeas}) associated to $\nabla_{y}\mathcal{J}_i(\nu^{(i)}_{\epsilon})$. Then $\lim_{\epsilon\rightarrow 0} \nu^{(i)}_{\epsilon}(t)= \nu^{(i)}_{*}(t)$, $\mathbb{P}$-a.s., as well as $\lim_{\epsilon\rightarrow 0} \Phi_i^{\epsilon}(t) = \Phi_i^{*}(t)$, $\mathbb{P}$-a.s., by continuity of $R_{y}^{(i)}$.
Optimality of $\underline{\nu}_{*}$, concavity of $y \mapsto R^{(i)}(X(t),y)$ and Fubini's Theorem imply
\begin{equation}
\label{Necessity1}
0 \geq \frac{1}{\epsilon}\,\Big[ \mathcal{J}_{SP}(\underline{\nu}_{\epsilon}) - \mathcal{J}_{SP}(\underline{\nu}_{*})\Big] \geq \sum_{i=1}^N \mathbb{E}\bigg\{\,\int_{[0,T)} \Phi_i^{\epsilon}(t) d(\nu^{(i)}(t) - \nu^{(i)}_{*}(t))\bigg\},
\end{equation}
since $\epsilon (\nu^{(i)} - \nu^{(i)}_{*}) = \nu^{(i)}_{\epsilon} - \nu^{(i)}_{*}$.

To prove that
\beq
\label{aimFatou}
\sum_{i=1}^N \mathbb{E}\bigg\{\,\int_{[0,T)} \Phi_i^{*}(t) \,d(\nu^{(i)}(t) - \nu^{(i)}_{*}(t))\bigg\} \leq 0
\eeq
it suffices to apply Fatou's Lemma since
\begin{equation*}
\label{Fatou}
\sum_{i=1}^N \mathbb{E}\bigg\{\,\int_{[0,T)} \Phi_i^{*}(t)\, d(\nu^{(i)}(t) - \nu^{(i)}_{*}(t))\bigg\} 
\\ \leq  \liminf_{\epsilon \rightarrow 0} \sum_{i=1}^N \mathbb{E}\bigg\{\,\int_{[0,T)} \Phi_i^{\epsilon}(t)\, d(\nu^{(i)}(t) - \nu^{(i)}_{*}(t))\bigg\} \leq 0.
\end{equation*}
For that, however, we must find $\mathbb{P}$-integrable random variables, $G_i(\omega)$, $i=1,2,...,N$, such that
$$I_i^{\epsilon}:= \int_{[0,T)} \Phi_i^{\epsilon}(t)\, d(\nu^{(i)}(t) - \nu^{(i)}_{*}(t)) \geq G_i,\qquad \mathbb{P}\text{-a.s.}$$
We write $I_i^{\epsilon}$ as
\beq
I_i^{\epsilon}  = \int_{0}^T e^{-\delta(t)} R^{(i)}_{y}(X(t),\nu^{(i)}_{\epsilon}(t))(\nu^{(i)}(t) -\nu^{(i)}_{*}(t)) dt -\int_{[0,T)} e^{-\delta(t)} d(\nu^{(i)}(t) -\nu^{(i)}_{*}(t))
\eeq
by Fubini's Theorem. Then, from concavity of $y \mapsto R^{(i)}(x,y)$ and
\beq
\label{stimeCepsilon}
\nu^{(i)}_{\epsilon}(t)
\left\{
\begin{array}{ll}
\leq \nu^{(i)}(t),\,\,\,\,\,\,\mbox{on}\,\,\{t: \nu^{(i)}(t) - \nu^{(i)}_{*}(t) \geq 0\},
\\ \\ \\
> \nu^{(i)}(t),\,\,\,\,\,\,\mbox{on}\,\,\{t: \nu^{(i)}(t) - \nu^{(i)}_{*}(t) < 0\}.
\end{array}
\right.
\eeq
we obtain $I_i^{\epsilon} = \int_{[0,T)} \Phi^i(t)\,d(\nu^{(i)}(t) - \nu^{(i)}_{*}(t)).$
Hence we define 
\beq
\label{Gi}
G_i:=\int_{[0,T)} \Phi^i(t)\,d(\nu^{(i)}(t) - \nu^{(i)}_{*}(t)), \quad \,\,i=1,2,...,N.
\eeq
Now (\ref{integrabilitavincolo}), Assumption \ref{Assumptionpi} and condition (\ref{assnecessity}), imply the integrability of $G_i$ since
$|G_i| \leq C\theta(T)[1+ \int_{0}^T e^{-\delta(t)}R^{(i)}\left(X(t),\theta(T)\right)dt]$, $\mathbb{P}$-a.s., with $C$ constant. \vspace{0.25cm}

\textbf{Step 2.} \quad We now characterize solutions of the linearized problem (\ref{LP}) by some flat-off conditions, like the second and the third ones of (\ref{KT1}). Define
\beq
\label{lambdatilde}
\Psi(t):= \esssup_{\tau \in [t,T]}\mathbb{E}\Big\{\max_{i \leq N}\nabla_{y}\mathcal{J}_i(\nu^{(i)}_{*})(\tau)\Big|\mathcal{F}_t\Big\}.
\eeq 
Thanks to Assumption \ref{Assumptionpi}, $\Psi$ is a supermartingale of class (D) with unique Dobb-Meyer decomposition into a uniformly integrable martingale $M$ and an increasing, predictable process $\lambda$ with integrable terminal value $\lambda(T)$; that is, $\Psi(t)=M(t) - \lambda(t)$. 
Now, by arguments similar to those in the proof of Bank \cite{Bank}, Lemma $2.5$, we show that every solution $\hat{\underline{\nu}}=(\hat{\nu}^{(1)},...,\hat{\nu}^{(N)})$ of (\ref{LP}) must necessarily satisfy the following conditions
\beq
\label{LPcond1}
\left\{
\begin{array}{ll}
\displaystyle \mathbb{E}\bigg\{\,\int_{[0,T)} \left(\nabla_{y}\mathcal{J}_i(\nu^{(i)}_{*})(s) - \Psi(s)\right)\, d\hat{\nu}^{(i)}(s)\bigg\} = 0, \qquad i=1,2,...,N, \\
\displaystyle \mathbb{E}\bigg\{\,\int_{[0,T)} \Big[\theta(s) - \sum_{i=1}^N \hat{\nu}^{(i)}(s)\Big]\, d\lambda(s)\bigg\} = 0.
\end{array}
\right.
\eeq
Then (\ref{LPcond1}) will also hold for $\underline{\nu}_{*}$ by Step $1$. 
We start by noticing that for any $\underline{\nu} \in \mathcal{S}_{\theta}$ we have
\begin{eqnarray}
\label{necessitydis1}
\lefteqn{\sum_{i=1}^N \mathbb{E}\bigg\{\int_{[0,T)}\nabla_{y}\mathcal{J}_i(\nu^{(i)}_{*})(s)\,d\nu^{(i)}(s)\bigg\} \leq \sum_{i=1}^N \mathbb{E}\bigg\{\int_{[0,T)}\Psi(s)\,d\nu^{(i)}(s)\bigg\}} \nonumber \\
& & = \sum_{i=1}^N \mathbb{E}\bigg\{\int_{[0,T)} \Big(\int_s^T d\lambda(u)\Big) d\nu^{(i)}(s)\bigg\} = \sum_{i=1}^N \mathbb{E}\bigg\{\int_{[0,T)}  \nu^{(i)}(s)\, d\lambda(s) \bigg\} \\
& & \leq \mathbb{E}\bigg\{\int_{[0,T)} \theta(s)\, d\lambda(s) \bigg\}, \nonumber
\end{eqnarray}
by definition (\ref{lambdatilde}).
The first equality follows from $\Psi(t)=\mathbb{E}\{\int_t^T d\lambda(s)|\mathcal{F}_t\}$ since $\Psi(T)=0$, whereas Fubini's Theorem yields the second one. Obviously, if $\underline{\nu}$ satisfies (\ref{LPcond1}), we have equalities in (\ref{necessitydis1}). 
On the other hand, if
\beq
\label{valuefumctionlinear}
\sup_{\underline{\nu} \in \mathcal{S}_{\theta}} \,\sum_{i=1}^N\mathbb{E}\bigg\{\,\int_{[0,T)} \nabla_{y}\mathcal{J}_i(\nu^{(i)}_{*})(s)\, d\nu^{(i)}(s)\bigg\} = \mathbb{E}\bigg\{\int_{[0,T)}\theta(s)\,d\lambda(s)\bigg\},
\eeq
then equalities hold through (\ref{necessitydis1}) and we obtain (\ref{LPcond1}).

It remains to show (\ref{valuefumctionlinear}). For every $i=1,2,...,N$ and $k \in \mathbb{N}$ define the sequence of stopping times
\beq
\label{stoppingtimesvalueLP}
\left\{
\begin{array}{ll}
\tau_0^{(i),k}:= \inf\{t \in [0,T) : \nabla_{y}\mathcal{J}_i(\nu^{(i)}_{*})(t) = \Psi(t)\} \wedge T; \\
\tau_j^{(i),k}:= \inf\{t \in [\tau_{j-1}^{(i),k},T) : \nabla_{y}\mathcal{J}_i(\nu^{(i)}_{*})(t) = \Psi(t),\,\,\,\theta(t) > \theta(\tau_{j-1}^{(i),k}) + \frac{1}{k}\} \wedge T, \quad j \geq 1,
\end{array}
\right.
\eeq
and then set
$$\nu^{(i),k}(t) := \frac{1}{N}\sum_{j=0}^{\infty} \theta(\tau_{j}^{(i),k} +) \mathds{1}_{(\tau_{j}^{(i),k}, \tau_{j+1}^{(i),k}]}(t), \quad i=1,2,...,N.$$
As in \cite{Bank}, proof of Lemma $2.5$, we have
$$\sum_{i=1}^N\mathbb{E}\bigg\{\,\int_{[0,T)} \nabla_{y}\mathcal{J}_i(\nu^{(i)}_{*})(s) d\nu^{(i),k}(s)\bigg\} \geq \mathbb{E}\bigg\{\int_{[0,T)}\theta(s)d\lambda(s)\bigg\} + \frac{1}{k}\mathbb{E}\bigg\{\int_{[0,T)}d\lambda(s)\bigg\},$$
and by letting $k \uparrow \infty$ we obtain (\ref{valuefumctionlinear}).

In conclusion, we have shown that, under (\ref{assnecessity}), the solution of problem (\ref{problemaEuropa}) solves (\ref{LP}) as well. On the other hand, any solution to (\ref{LP}) is characterized by the `flat-off conditions' (\ref{LPcond1}) and this concludes the proof.
\end{proof}

\begin{Remark}
We point out that our stochastic Kuhn-Tucker approach may be generalized to the case of investment processes also bounded from below by a stochastic process. In that case the Lagrangian functional is defined in terms of two Lagrange multipliers, $d\lambda_1(\omega,t)$ and $d\lambda_2(\omega,t)$. 
\end{Remark}

\section{Finding the Lagrange Multiplier for Some Known Models}
\label{Applications}

In this Section, we consider some `finite-fuel' problems from the literature (cf.\ Bank \cite{Bank} and Karatzas \cite{Karatzas85}, among others) for which the form of the optimal investment is known (see (\ref{dynamicoptimalsol}), (\ref{optcostoconvesso}) and (\ref{optCobb}) below). We shall provide the explicit form of the corresponding Lagrange multiplier optional measure $d\lambda$ (see (\ref{dlambdaidentification}), (\ref{LagrangemultBenes}) and (\ref{Cobbcompensator2}) below). It turns out that $d\lambda$ differs from $dA$ at most for its sign, where $A$ is the compensator in the Doob-Meyer decomposition of the profit (cost) functional supergradient's (subgradient's) Snell envelope. In particular, we shall prove that $d\lambda$ (and $dA$) is absolutely continuous with respect to the Lebesgue measure.
In the following examples we assume $\delta(t) = \delta t$, with $\delta > 0$, and $T=+\infty$.

%%%%%%%%%%%%%%%%%%%%%%%%%%%%%%%%%%%%%%%%%%%%%%%%%%%%%%%%%%%%%%%%%%%%%%%%%%%%%%%%%%%%%%%%%%%%%%%%%%%%%%%%%%%%%%%%%%%%%%%%%%%%%%%%%%%%%%%%%%%%%%%%%%%%%%%%%%%%%%%%%%%%%%%%%%%%%%%%%%%%%%%%%%%%%%%%%%%%%%%%%%%%%%%%%%%%%%%%%%%%%%%%%%%%%

\subsection{The Finite Fuel Monotone Follower of Bank \cite{Bank}}
\label{Banksec}

In the setting of Section \ref{TheModel}, under (\ref{integrabilitavincolo}) and Assumption \ref{Assumptionpi}, we take $N=1$ and $T=+\infty$. We set $\nu:=\nu^{(1)}$, $y:=y^{(1)}$, $R:=R^{(1)}$ and $\mathcal{J}:=\mathcal{J}_1.$ Notice that with $$c(\omega, t, \nu(\omega,t)):= -e^{-\delta t}R(X(\omega,t), \nu(\omega,t)),$$ and instantaneous cost of investment $$k(\omega,t):= -e^{-\delta t},$$ we recover Bank's model \cite{Bank}. Recall that Bank's optimal investment (cf.\ Bank \cite{Bank}, Theorem $3.1$) was given by
\beq
\label{dynamicoptimalsol}
\nu_{*}(t) := \sup_{0 \leq s < t}\left(l(s) \wedge \theta(s)\right) \vee y
\eeq
in terms of the `base capacity' process $l(t)$ (cf.\ Riedel and Su \cite{RiedelSu}, Definition $3.1$) which uniquely solves the stochastic backward equation (cf.\ Bank and El Karoui \cite{BankElKaroui}, Theorem $1$ and Theorem $3$)
\beq
\label{dynamicbackward}
\mathbb{E}\bigg\{\,\int_{\tau}^{\infty} e^{- \delta s} R_{y}(X(s),\sup_{\tau \leq u < s}l(u))\, ds\, \Big| \mathcal{F}_{\tau}\bigg\} = e^{-\delta \tau},\,\,\,\,\,\,\forall \tau \in \mathcal{T}.
\eeq
Easily adapting to our setting arguments as in Bank and K\"uchler \cite{BankKuchler}, proof of Theorem $1$, one can show that $l$ has upper right-continuous sample paths; that is, $l(t) = \limsup_{s \downarrow t}l(s)$.
 
We show the optimality of $\nu_{*}(t)$ by means of our generalized Kuhn-Tucker conditions; as a subproduct we  obtain an enlightening interpretation of the first order conditions stated in Bank \cite{Bank}, Theorem $2.2$, for a single firm optimal investment problem. 

Recall that the supergradient of the net profit functional is the unique optional process given by
\beq
\label{recallgradient}
\nabla_{y}\mathcal{J}(\nu)(t) := \mathbb{E}\bigg\{\,\int_{t}^{\infty} e^{- \delta s}R_{y}\left(X(s),\nu(s)\right)\,ds\, \Big| \mathcal{F}_{t}\bigg\} -  e^{- \delta t}.
\eeq
By Theorem \ref{KTEuropa} an investment plan $\nu_{*}(t)$ is optimal if
\beq
\label{dynamicCOND1}
\nabla_{y}\mathcal{J}(\nu_{*})(\tau) \leq \mathbb{E}\bigg\{\,\int_{\tau}^{\infty}d\lambda(s) \Big| \mathcal{F}_{\tau}\bigg\},\,\,\,\mathbb{P}\mbox{-a.s.},\,\, \tau \in \mathcal{T},
\eeq

\beq
\label{dynamicCOND2}
\int_0^{\infty}\left[\nabla_{y}\mathcal{J}(\nu_{*})(t) - \mathbb{E}\bigg\{\,\int_{t}^{\infty} d\lambda(s) \Big| \mathcal{F}_{t}\bigg\}\right] d\nu_{*}(t) = 0,\,\,\,\mathbb{P}\mbox{-a.s.},
\eeq

\beq
\label{dynamicCOND3}
\nu_{*}(t) \leq \theta(t),\,\,\,\mathbb{P}\mbox{-a.s.},\,\,\forall t \geq 0,
\eeq

\beq
\label{dynamicCOND4}
\mathbb{E}\bigg\{\,\int_0^{\infty}  \left(\theta(t) - \nu_{*}(t)\right) d\lambda(t)\bigg\} = 0,
\eeq
for some nonnegative optional random measure $d\lambda(\omega,t)$ such that $\mathbb{E}\{\int_{0}^{\infty}d\lambda(s)\} < +\infty$. One may easily see from (\ref{dynamicCOND4}) that $d\lambda$ is flat off $\{\nu_{*} = \theta\}$.

\begin{Lemma}
\label{piprimoer}
For almost every $\omega \in \Omega$ one has $\left[R_{y}\left(X(\omega,t),\theta(\omega,t)\right) - \delta\right]\mathds{1}_{\{l(\omega,\cdot) > \theta(\omega, \cdot)\}}(t) \geq 0$. 
\end{Lemma}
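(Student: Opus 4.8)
The plan is to exploit the characterization of the base capacity process $l$ as the solution of the Bank--El Karoui backward equation \eqref{dynamicbackward}, together with the Inada-type monotonicity of $R_y(x,\cdot)$ from Assumption \ref{Assumptionpi}, and derive the claim by a pathwise/conditional comparison argument. The set $\{l(\omega,\cdot) > \theta(\omega,\cdot)\}$ is where the desirable capacity strictly exceeds the available fuel, and on such times the constraint is binding for the optimal policy $\nu_*$; heuristically, the marginal profit of the last admissible unit of capital, evaluated at $\theta$, must exceed the marginal cost $\delta$, which is exactly the assertion $R_y(X,\theta)-\delta\ge 0$. The factor $e^{-\delta t}$ being strictly positive, the inequality $[R_y(X,\theta)-\delta]\mathds{1}_{\{l>\theta\}}\ge 0$ is equivalent to showing that $R_y(X(t),\theta(t)) \ge \delta$ on $\{l(t)>\theta(t)\}$.

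First I would fix $\omega$ and a time $t$ with $l(\omega,t) > \theta(\omega,t)$. By the upper right-continuity of $l$ one can find (or pass to) a stopping time $\tau$ at which $l(\tau) > \theta(\tau)$, and by monotonicity of $R_y(x,\cdot)$ in its second argument, $R_y(X(s),\sup_{\tau\le u<s} l(u)) \le R_y(X(s),\theta(s))$ whenever $\sup_{\tau\le u<s} l(u) \ge \theta(s)$. Since $l(\tau)>\theta(\tau)$ and $\theta$ is nondecreasing, one expects $\sup_{\tau\le u<s} l(u) \ge l(\tau) > \theta(\tau)$ to dominate $\theta(s)$ at least for $s$ in a right-neighborhood of $\tau$; controlling this on the whole half-line $[\tau,\infty)$ is the delicate point (see below). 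Granting the domination on the relevant time set, plug into \eqref{dynamicbackward}:
\[
e^{-\delta\tau} = \mathbb{E}\Big\{\int_\tau^\infty e^{-\delta s} R_y\big(X(s),\textstyle\sup_{\tau\le u<s} l(u)\big)\,ds \,\Big|\,\mathcal{F}_\tau\Big\} \le \mathbb{E}\Big\{\int_\tau^\infty e^{-\delta s} R_y\big(X(s),\theta(s)\big)\,ds \,\Big|\,\mathcal{F}_\tau\Big\}.
\]
On the other hand, since $\theta$ is nondecreasing one has $\theta(s)\ge\theta(\tau)$ for $s\ge\tau$, hence $R_y(X(s),\theta(s)) \le R_y(X(s),\theta(\tau))$ in general goes the wrong way; instead I would compare directly the right-hand side above with $\delta \int_\tau^\infty e^{-\delta s}\,ds = e^{-\delta\tau}$, i.e. argue that the inequality forces the time-averaged (discounted) value of $R_y(X(s),\theta(s))$ to be at least $\delta$, and then localize by shrinking $\tau$ down to $t$ along a sequence of stopping times whose associated intervals collapse, using the continuity of $R_y$ and the right-continuity of $X$, $\theta$, $l$ to pass to the limit and recover the pointwise inequality $R_y(X(t),\theta(t))\ge\delta$ for a.e.\ $t$ in $\{l>\theta\}$.

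The main obstacle I anticipate is twofold. First, the comparison $\sup_{\tau\le u<s} l(u) \ge \theta(s)$ need not hold for all $s\ge\tau$ — it can fail once $\theta$ grows past the running supremum of $l$ — so the clean substitution into \eqref{dynamicbackward} only works on a sub-interval, and one must carefully choose the stopping time (e.g.\ $\tau = \inf\{s\ge t : l(s)\le\theta(s)\}$ type constructions, or the $\tau_j^{(i),k}$-style stopping times already used in the proof of Theorem \ref{thmnecessity}) so that the domination is genuinely valid on $[\tau, \sigma)$ for an appropriate later stopping time $\sigma$, and then handle the backward equation restricted to that window. Second, extracting a \emph{pointwise} a.e.\ statement in $t$ from a family of \emph{conditional-expectation} inequalities indexed by stopping times requires a Lebesgue-differentiation / optional-section argument; here the regularity granted by Assumption \ref{Assumptionpi} (continuity of $R_y$, integrability) and the upper right-continuity of $l$ are exactly what make the limit $\tau\downarrow t$ legitimate. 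Once these two points are secured, the conclusion $[R_y(X(\omega,t),\theta(\omega,t))-\delta]\mathds{1}_{\{l(\omega,\cdot)>\theta(\omega,\cdot)\}}(t)\ge 0$ for a.e.\ $\omega$ follows immediately.
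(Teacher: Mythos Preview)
Your instinct to start from the backward equation \eqref{dynamicbackward} and exploit the monotonicity of $R_y(x,\cdot)$ is correct, but the specific route you take carries a gap that is not just a technicality. You try to establish $R_y(X(t),\theta(t))\ge\delta$ \emph{directly} on $\{l>\theta\}$ by comparing $\sup_{\tau\le u<s}l(u)$ with $\theta(s)$ inside the integral. As you yourself note, this comparison fails as soon as $\theta$ grows past the running supremum of $l$, so the displayed inequality
\[
e^{-\delta\tau}\ \le\ \mathbb{E}\Big\{\int_\tau^\infty e^{-\delta s} R_y\big(X(s),\theta(s)\big)\,ds \,\Big|\,\mathcal{F}_\tau\Big\}
\]
is not justified on all of $[\tau,\infty)$. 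Your proposed repair --- restrict to a window $[\tau,\sigma)$ where the domination holds and then ``localize'' --- does not close: once you truncate, the backward equation no longer gives you $e^{-\delta\tau}$ on the left, but rather $e^{-\delta\tau}-\mathbb{E}\{e^{-\delta\sigma}\mid\mathcal{F}_\tau\}$ plus a tail term that you have no independent control over. And the subsequent Lebesgue-differentiation step (shrinking $\tau\downarrow t$) would require pathwise continuity of $s\mapsto R_y(X(s),\theta(s))$ that is nowhere assumed.

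The paper avoids both obstacles through a different, sharper idea: it never brings $\theta$ into the integral at all. Instead it proves the \emph{unconditional} inequality $R_y(X(t),l(t))\ge\delta$ for every $t$, and only then uses monotonicity: on $\{l(t)>\theta(t)\}$ one has $R_y(X(t),\theta(t))\ge R_y(X(t),l(t))\ge\delta$. The mechanism is as follows. Fix $t$, take any stopping time $\tau_1\ge t$, split the backward equation at $\tau_1$, and use $\sup_{t\le u<s}l(u)\ge \sup_{\tau_1\le u<s}l(u)$ on $[\tau_1,\infty)$ together with \eqref{dynamicbackward} at $\tau_1$ to get
\[
\mathbb{E}\Big\{\int_t^{\tau_1} e^{-\delta s} R_y(X(s),l(t))\,ds\,\Big|\,\mathcal{F}_t\Big\}\ \ge\ \mathbb{E}\big\{e^{-\delta t}-e^{-\delta\tau_1}\,\big|\,\mathcal{F}_t\big\},
\]
where on the left the running supremum has been replaced by the constant $l(t)$ (legitimate since $\sup_{t\le u<s}l(u)\ge l(t)$ and $R_y$ is decreasing). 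Now choose $\tau_1=\tau_1(\epsilon):=\inf\{s\ge t: R_y(X(s),l(t))>R_y(X(t),l(t))+\epsilon\}$, so that the integrand on the left is bounded above by $R_y(X(t),l(t))+\epsilon$ on $[t,\tau_1(\epsilon))$. This yields
\[
\big(R_y(X(t),l(t))+\epsilon\big)\cdot\tfrac{1}{\delta}\,\mathbb{E}\big\{e^{-\delta t}-e^{-\delta\tau_1(\epsilon)}\,\big|\,\mathcal{F}_t\big\}\ \ge\ \mathbb{E}\big\{e^{-\delta t}-e^{-\delta\tau_1(\epsilon)}\,\big|\,\mathcal{F}_t\big\},
\]
hence $R_y(X(t),l(t))+\epsilon\ge\delta$ for all $\epsilon>0$, and the lemma follows. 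The point you were missing is that freezing the comparison level at $l(t)$ (not $\theta$) makes the monotonicity substitution globally valid and simultaneously allows a stopping time tailored to the process $s\mapsto R_y(X(s),l(t))$, which delivers the pointwise inequality without any differentiation argument.
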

\begin{proof}
Take $t \geq 0$ arbitrary but fixed. Then, for any stopping time $\tau_1 \geq t$ a.s., equation (\ref{dynamicbackward}) and the decreasing property of $R_y$ in its second argument imply that
$$e^{-\delta t} \leq \mathbb{E}\bigg\{\,\int_{t}^{\tau_1} e^{- \delta s} R_{y}(X(s),\sup_{t \leq u < s}l(u))\, ds\, \Big| \mathcal{F}_{t}\bigg\} + \mathbb{E}\Big\{e^{-\delta \tau_1}\Big|\mathcal{F}_{t}\Big\} \quad \mbox{a.s.},$$
hence
\begin{equation}
\label{secondinequality}
\mathbb{E}\bigg\{\,\int_{t}^{\tau_1} e^{- \delta s} R_{y}(X(s),l(t))\, ds\, \Big| \mathcal{F}_{t}\bigg\} \geq \mathbb{E}\Big\{e^{-\delta t} - e^{-\delta \tau_1}\Big|\mathcal{F}_{t}\Big\}\quad \mbox{a.s.}
\end{equation}
In particular, for $\epsilon > 0$, define $\tau_1(\epsilon):=\inf\{s \geq t: R_{y}\left(X(s),l(t)\right) > R_{y}\left(X(t),l(t)\right) + \epsilon \}$ (with the usual convention $\inf\{\emptyset\}=+\infty$) to obtain that a.s.
\begin{equation}
\label{stimatauepsilon1}
\mathbb{E}\bigg\{\,\int_{t}^{\tau_1(\epsilon)} e^{- \delta s} R_{y}\left(X(s),l(t)\right)\, ds\, \Big| \mathcal{F}_{t}\bigg\} \leq \frac{1}{\delta}\left(R_{y}\left(X(t),l(t)\right) + \epsilon\right)\mathbb{E}\Big\{\,e^{-\delta t} - e^{-\delta \tau_1(\epsilon)}\Big|\mathcal{F}_{t}\Big\}.
\end{equation}
Now (\ref{stimatauepsilon1}) and (\ref{secondinequality}) with $\tau_1 \equiv \tau_1(\epsilon)$ imply $R_y\left(X(t),l(t)\right) + \epsilon \geq \delta$ a.s.\ for all $\epsilon > 0$. It follows $R_y\left(X(t),l(t)\right)\geq \delta$ a.s., and hence $\left[R_{y}\left(X(t),\theta(t)\right) - \delta\right]\mathds{1}_{\{l(\cdot) \geq \theta(\cdot)\}}(t) \geq 0$ a.s.\ for all $t \geq 0$, by concavity of $y \mapsto R(x,y)$.
\end{proof}

In the next Theorem we prove optimality of $\nu_{*}$ as in (\ref{dynamicoptimalsol}) and, as a new result, we explicitly evaluate the form of the associated Lagrange multiplier measure $d\lambda$. Optimality of $\nu_{*}$ can be shown adapting arguments of Bank \cite{Bank}, proof of Theorem $3.1$. However, we provide here the details for the sake of completeness.
\begin{Theorem}
\label{dynamictheorem}
The process $\nu_{*}(t)$ defined in (\ref{dynamicoptimalsol}) is optimal and the Lagrange multiplier $d\lambda(t)$ is absolutely continuous with respect to the Lebesgue measure.
\end{Theorem}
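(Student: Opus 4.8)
The plan is to verify that the candidate policy $\nu_{*}(t) = \sup_{0 \le s < t}(l(s) \wedge \theta(s)) \vee y$ together with an explicitly guessed measure $d\lambda$ satisfies the four Kuhn-Tucker conditions (\ref{dynamicCOND1})--(\ref{dynamicCOND4}) of Theorem \ref{KTEuropa}, and then read off absolute continuity of $d\lambda$ from its explicit form. Guided by the discussion in the introduction and by Lemma \ref{piprimoer}, the natural guess is
$$d\lambda(t) := e^{-\delta t}\bigl[R_{y}(X(t),\theta(t)) - \delta\bigr]\,\mathds{1}_{\{l(t) \ge \theta(t)\}}(t)\,dt,$$
which is manifestly nonnegative by Lemma \ref{piprimoer}, absolutely continuous with respect to Lebesgue measure by construction, and — since $\{l(t) \ge \theta(t)\}$ is (up to a null set) contained in $\{\nu_{*}(t) = \theta(t)\}$ — flat off $\{\nu_{*} = \theta\}$, which gives (\ref{dynamicCOND3})--(\ref{dynamicCOND4}) immediately. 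The integrability $\mathbb{E}\{\int_0^\infty d\lambda(s)\} < \infty$ will follow from Assumption \ref{Assumptionpi}(2), concavity (so that $R_y(X(s),\theta(s)) \le R(X(s),\theta(s))/\theta(s) \le R(X(s),\theta(s))/\theta_o$ using $R(x,0)=0$) and (\ref{integrabilitavincolo}); one should double-check the bound carefully but it is routine.

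The substance is conditions (\ref{dynamicCOND1}) and (\ref{dynamicCOND2}). First I would compute, for a stopping time $\tau$, the conditional expectation $\mathbb{E}\{\int_\tau^\infty d\lambda(s)\mid\mathcal{F}_\tau\}$ and compare it to $\nabla_{y}\mathcal{J}(\nu_{*})(\tau)$ from (\ref{recallgradient}). The key observation is that on the interval where $l < \theta$ the process $\nu_{*}$ equals a running supremum of $l$, so the Bank--El Karoui equation (\ref{dynamicbackward}) controls $\nabla_{y}\mathcal{J}(\nu_{*})$ there, while on the set $\{l \ge \theta\}$ one has $\nu_{*} = \theta$ and the integrand of $d\lambda$ is precisely $e^{-\delta s}R_y(X(s),\theta(s)) - \delta e^{-\delta s} = e^{-\delta s}R_y(X(s),\nu_{*}(s)) + \frac{d}{ds}e^{-\delta s}$. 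Integrating this identity against $ds$ from $\tau$ to $\infty$ produces exactly $\int_\tau^\infty e^{-\delta s}R_y(X(s),\nu_{*}(s))\,ds - e^{-\delta\tau}$ up to the contribution of the complementary set $\{l < \theta\}$, on which (\ref{dynamicbackward}) together with the monotonicity of $R_y$ and of the running supremum yields the correct inequality/equality. Taking $\mathcal{F}_\tau$-conditional expectations then gives (\ref{dynamicCOND1}) as an inequality for every $\tau$, with equality when $\tau$ is chosen so that the process $\nu_{*}$ is strictly increasing immediately after $\tau$; this equality, integrated $d\nu_{*}(t)$, is (\ref{dynamicCOND2}), because $d\nu_{*}$ is carried by the set where $\nu_{*}(t) = l(t^-) \le \theta(t^-)$, i.e.\ by $\{l \le \theta\}$, where the gradient meets the barrier $\mathbb{E}\{\int_t^\infty d\lambda(s)\mid\mathcal{F}_t\}$ exactly.

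The main obstacle is the bookkeeping at the boundary of the two regions $\{l < \theta\}$ and $\{l \ge \theta\}$ and, relatedly, handling the flat portions of $\nu_{*}$: $\nu_{*}$ increases only on a (random) closed set, and one must argue that on the support of $d\nu_{*}$ the two sides of (\ref{dynamicCOND2}) genuinely agree, using the upper right-continuity of $l$ noted after (\ref{dynamicbackward}) and the left-continuity built into $\nu_{*}$ and $\theta$. A clean way to organise this is to follow Bank \cite{Bank}, proof of Theorem $3.1$: fix $\tau$, split $[\tau,\infty)$ at the first time the running supremum defining $\nu_{*}$ strictly exceeds $\nu_{*}(\tau)$, apply (\ref{dynamicbackward}) on the first piece and the explicit $d\lambda$-identity on the second, and patch. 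Once (\ref{dynamicCOND1})--(\ref{dynamicCOND4}) are checked, Theorem \ref{KTEuropa} delivers optimality of $\nu_{*}$ (and uniqueness, consistent with Theorem \ref{ExistenceandUniqueness}), and absolute continuity of $d\lambda$ is visible from its displayed density; it then also follows, comparing with Bank's decomposition $\Psi = M - \lambda$, that $dA$ differs from $d\lambda$ at most by sign and hence inherits the same regularity, as claimed in the introduction.
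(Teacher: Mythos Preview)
Your overall strategy---guess $d\lambda$ explicitly and then verify the Kuhn--Tucker conditions directly---differs structurally from the paper's route. The paper first establishes (\ref{dynamicCOND1})--(\ref{dynamicCOND2}) \emph{abstractly}: it splits the gradient integral at $\rho_\theta(\tau)=\inf\{s\ge\tau: l(s)>\theta(s+)\}$, uses (\ref{dynamicbackward}) on $[\tau,\rho_\theta(\tau)]$, and recognises the tail as the Snell envelope $\mathbb{S}(\nu_*)(\tau)=\esssup_{\rho\ge\tau}\mathbb{E}\{\nabla_y\mathcal{J}(\nu_*)(\rho)\mid\mathcal{F}_\tau\}$. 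Setting $d\lambda:=dA(\nu_*)$, the compensator in the Doob--Meyer decomposition of the class-(D) supermartingale $\mathbb{S}(\nu_*)$, then gives (\ref{dynamicCOND1})--(\ref{dynamicCOND2}) for free. Only \emph{afterwards} is $dA(\nu_*)$ identified explicitly, by checking that $\mathbb{S}(\nu_*)$ is a martingale while $l\le\theta$ and has density $e^{-\delta s}[R_y(X(s),\theta(s))-\delta]$ while $l>\theta(\cdot+)$. Your guess for $d\lambda$ is thus correct, but in the paper it is an output, not an input.

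Your direct verification of (\ref{dynamicCOND1}) has a genuine gap as sketched. You propose to split the time axis into the \emph{sets} $\{l<\theta\}$ and $\{l\ge\theta\}$ and to invoke (\ref{dynamicbackward}) on the first. But (\ref{dynamicbackward}) is an identity for integrals over intervals $[\tau,\infty)$ with the running supremum $\sup_{\tau\le u<s}l(u)$ anchored at a stopping time; it says nothing about the integral over an arbitrary random set such as $\{s\ge\tau:l(s)<\theta(s)\}$, which is typically a countable disjoint union of excursion intervals. Your alternative split, ``at the first time the running supremum defining $\nu_*$ strictly exceeds $\nu_*(\tau)$'', is also not $\rho_\theta(\tau)$ and does not deliver the required inequality. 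The paper's Snell-envelope detour is exactly what absorbs this difficulty: after the single split at $\rho_\theta(\tau)$ the tail is left intact as $\mathbb{S}(\nu_*)(\tau)$, and Doob--Meyer produces the compensator without any further pathwise dissection of $\{l\gtrless\theta\}$. To make your direct route rigorous you would have to reproduce that martingale analysis anyway, so the Snell-envelope scaffolding is not optional here.
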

\begin{proof}
It suffices to check the generalized Kuhn-Tucker conditions (\ref{dynamicCOND1})--(\ref{dynamicCOND4}) for $\nu_{*}(t)$.
Obviously $\nu_{*}$ is admissible and satisfies (\ref{dynamicCOND3}).
Recall that the available resources process $\theta(t)$ is nondecreasing and left-continuous.
To show (\ref{dynamicCOND1}), (\ref{dynamicCOND2}) and (\ref{dynamicCOND4}), first of all fix an arbitrary $\tau \in \mathcal{T}$, define 
\beq
\label{tauthetaBank}
\rho_{\theta}(\tau):=\inf\{s \geq \tau: l(s) > \theta(s+)\},
\eeq
and notice that $\rho_{\theta}(\tau)$ is a point of increase for $\sup_{\tau \leq u < s}l(u)$, $s > \rho_{\theta}(\tau)$.
Following the arguments of Bank \cite{Bank}, proof of Theorem $3.1$, we can now evaluate the Snell envelope of $\nabla_{y}\mathcal{J}(\nu_{*})$. 
From (\ref{dynamicoptimalsol}) we have
\begin{eqnarray}
\label{checkCOND12Bank0}
\lefteqn{\mathbb{E}\bigg\{\,\int_{\tau}^{\infty} e^{-\delta s} R_{y}\left(X(s),\nu_{*}(s)\right) ds\, \Big| \mathcal{F}_{\tau}\bigg\} } \nonumber \\
& & = \mathbb{E}\bigg\{\,\int_{\tau}^{\rho_{\theta}(\tau)} e^{-\delta s} R_{y}(X(s), \nu_{*}(s)) ds\,\Big | \mathcal{F}_{\tau} \bigg\}   \nonumber \\
& &\hspace{0.9cm} + \mathbb{E}\bigg\{\int_{\rho_{\theta}(\tau)}^{\infty} e^{-\delta s} R_{y}(X(s),\nu_{*}(s)) ds\, \Big| \mathcal{F}_{\tau}\bigg\}  \\
& & \leq  \mathbb{E}\bigg\{\,\int_{\tau}^{\rho_{\theta}(\tau)} e^{-\delta s} R_{y}(X(s),\sup_{\tau \leq u < s}l(u)) ds\,\Big | \mathcal{F}_{\tau}\bigg\} \nonumber  \\
& &\hspace{0.9cm} + \mathbb{E}\bigg\{\int_{\rho_{\theta}(\tau)}^{\infty} e^{-\delta s} R_{y}(X(s),\sup_{0 \leq u < s}(l(u) \wedge \theta(u)) ds\, \Big| \mathcal{F}_{\tau} \bigg\} \nonumber,  
\end{eqnarray}
since $\nu_{*}(s) \geq \sup_{\tau \leq u < s}l(u)$, for $s \in (\tau,\rho_{\theta}(\tau)]$, and $\sup_{0 \leq u < s}(l(u) \wedge \theta(u)) \geq \sup_{0 \leq u \leq \rho_{\theta}(\tau)}(l(u) \wedge \theta(u)) \geq \theta(\rho_{\theta}(\tau)) \geq  y$, for $s > \rho_{\theta}(\tau)$, by definition of $\rho_{\theta}(\tau)$ and upper right-continuity of $l$. Also, $\sup_{\tau \leq u < s}l(u) = \sup_{\rho_{\theta}(\tau) \leq u < s}l(u)$, for $s > \rho_{\theta}(\tau)$, and (\ref{dynamicbackward}) imply
\begin{eqnarray}
\label{checkCOND12Bank}
\lefteqn{\mathbb{E}\bigg\{\,\int_{\tau}^{\infty} e^{-\delta s} R_{y}\left(X(s),\nu_{*}(s)\right) ds\, \Big| \mathcal{F}_{\tau}\bigg\} } \nonumber \\
& & \leq \mathbb{E}\bigg\{\,\int_{\tau}^{\infty} e^{-\delta s} R_{y}(X(s),\sup_{\tau \leq u < s}l(u)) ds\,\Big | \mathcal{F}_{\tau}\bigg\} \nonumber  \\
& & \hspace{0.9cm} - \mathbb{E}\bigg\{\,\int_{\rho_{\theta}(\tau)}^{\infty} e^{-\delta s} R_{y}(X(s),\sup_{\tau \leq u < s}l(u)) ds\,\Big | \mathcal{F}_{\tau}\bigg\}  \\
& &\hspace{0.9cm} + \mathbb{E}\bigg\{\int_{\rho_{\theta}(\tau)}^{\infty} e^{-\delta s} R_{y}(X(s),\sup_{0 \leq u < s}(l(u) \wedge \theta(u)) ds\, \Big| \mathcal{F}_{\tau} \bigg\} \nonumber \\
& & = e^{-\delta \tau} + \mathbb{E}\bigg\{\int_{\rho_{\theta}(\tau)}^{\infty} e^{-\delta s} \Big[R_{y}(X(s),\sup_{0 \leq u < s}(l(u) \wedge \theta(u)) - \delta\Big] ds\, \Big| \mathcal{F}_{\tau} \bigg\}, \nonumber
\end{eqnarray}
with equality in (\ref{checkCOND12Bank0}) and (\ref{checkCOND12Bank}) if and only if $\tau$ is a point of increase for $\nu_{*}$ (that is, $d\nu_{*}(\tau) > 0$). 

Notice that the last term in the right-hand side of (\ref{checkCOND12Bank}) does coincide with the Snell envelope, $\mathbb{S}(\nu_{*})$, of $\nabla_{y}\mathcal{J}(\nu_{*})$ (cf.\ Bank \cite{Bank}, proof of Theorem $3.1$); that is,
\begin{eqnarray}
\label{SnellenvelopediBank}
& & \mathbb{S}(\nu_{*})(\tau):= \esssup_{\tau \leq \rho \leq +\infty} \mathbb{E}\left\{\nabla_{y}\mathcal{J}(\nu_*)(\rho)|\mathcal{F}_{\tau}\right\} \nonumber \\
& & = \mathbb{E}\bigg\{\int_{\rho_{\theta}(\tau)}^{\infty} e^{-\delta s} \Big[R_{y}(X(s),\sup_{0 \leq u < s}l(u) \wedge \theta(u)) - \delta\Big] ds\, \Big| \mathcal{F}_{\tau} \bigg\}.
\end{eqnarray}
Thanks to Assumption \ref{Assumptionpi}, $\mathbb{S}(\nu_{*})$ is a process of class (D). Indeed, for any stopping time $\tau \in \mathcal{T}$ and for some positive constant $C$ (depending on $y$), one has
\begin{eqnarray}
\label{classD}
|\mathbb{S}(\nu_{*})(\tau)| & \hspace{-0.25cm} \leq \hspace{-0.25cm}& \mathbb{E}\bigg\{\int_{0}^{\infty} e^{-\delta s} \big|R_{y}(X(s), y \vee \sup_{0 \leq u < s}l(u) \wedge \theta(u)) - \delta \big| ds\, \Big| \mathcal{F}_{\tau} \bigg\} \nonumber \\
& \hspace{-0.25cm} \leq \hspace{-0.25cm} & 1 + \frac{1}{y}\mathbb{E}\bigg\{\int_{0}^{\infty} e^{-\delta s} R(X(s),\theta(s))\,ds\Big| \mathcal{F}_{\tau} \bigg\}  \\
& \hspace{-0.25cm} \leq \hspace{-0.25cm} & C\bigg[ 1 + \mathbb{E}\bigg\{\int_{0}^{\infty} e^{-\delta s} R(X(s),\theta(s))\,ds\Big| \mathcal{F}_{\tau} \bigg\}\bigg], \nonumber 
\end{eqnarray}
where for the second inequality we have used that $R(x,\cdot)$ is strictly concave, increasing and such that $R(x,0)=0$, $x \in \mathbb{R}$. 
Also, the process $\{\mathbb{E}\{\int_{0}^{\infty} e^{-\delta s} R(X(s),\theta(s))\,ds | \mathcal{F}_{t}\}\}_{t \geq 0}$ is a uniformly integrable martingale by the second part of Assumption \ref{Assumptionpi} (see, e.g., Revuz and Yor \cite{RevuzYor}, Chapter II, Theorem $3.1$) and hence the family of random variables $\{\mathbb{E}\{\int_{0}^{\infty} e^{-\delta s} R(X(s),\theta(s))\,ds| \mathcal{F}_{\tau}\},\,\tau \in \mathcal{T}\}$ is uniformly integrable by Revuz and Yor \cite{RevuzYor}, Chapter II, Theorem $3.2$. It follows that $\mathbb{S}(\nu_{*})$ is of class (D). 

If $A(\nu_{*})$ denotes the unique predictable (hence optional) increasing process in the Doob-Meyer decomposition of the supermartingale $\mathbb{S}(\nu_{*})$, then $\mathbb{S}(\nu_{*})(t) = \mathbb{E}\left\{ A(\infty) - A(t)\, |\mathcal{F}_t\right\}$, since $\nabla_{y}\mathcal{J}(\nu_{*})(\infty)=0$.
Therefore $\nu_{*}$ (as defined in (\ref{dynamicoptimalsol})) fulfills (\ref{dynamicCOND1}), (\ref{dynamicCOND2}) and (\ref{dynamicCOND4}) if we identify $d\lambda$ with the optional nonnegative random measure $dA(\nu_{*})$; i.e., if we set $d\lambda \equiv dA(\nu_{*})$.

We now aim to find the form of the Lagrange multiplier, that is the compensator part $dA(\nu_{*})$ of the Doob-Meyer decomposition of $\mathbb{S}(\nu_{*})$, and to show that $d\lambda$ is flat off $\{\nu_{*}=\theta\}$.
Recalling (\ref{tauthetaBank}) it is not hard to see that $\{\mathbb{S}(\nu_{*})(u \wedge \rho_{\theta}(t))\}_{u \geq t}$ is an $\mathcal{F}_u$-martingale, for any $t \geq 0$ arbitrary but fixed; thus, $\mathbb{S}(\nu_{*})$ is a martingale until the base capacity $l$ is below the finite fuel $\theta$. 
In fact, by iterated conditioning and the fact that $\rho_{\theta}(u_1 \wedge \rho_{\theta}(t))= \rho_{\theta}(u_2 \wedge \rho_{\theta}(t))$ for all $u_1$, $u_2 \in [t, \infty)$, it follows that $\{\mathbb{S}(\nu_{*})(u \wedge \rho_{\theta}(t))\}_{u \geq t}$ is an $\mathcal{F}_{u \wedge \rho_{\theta}(t)}$-martingale. Then $\{\mathbb{S}(\nu_{*})(u \wedge \rho_{\theta}(t))\}_{u \geq t}$ is an $\mathcal{F}_u$-martingale by Revuz and Yor \cite{RevuzYor}, Corollary $3.6$.

Next, we define
$$\sigma_{\theta}(t):=\inf\{s > t: l(s)\leq \theta(s+)\}$$ 
and we show that $\{\mathbb{S}(\nu_{*})(u \wedge \sigma_{\theta}(t))\}_{u \geq t}$ is an $\mathcal{F}_u$-submartingale. 
In fact, from $\rho_{\theta}(u \wedge \sigma_{\theta}(t)) = u \wedge \sigma_{\theta}(t)$ for all $u \geq t$, it follows that the process
\beq
\label{supermartingalaS}
\bigg\{\mathbb{S}(\nu_{*})(u \wedge \sigma_{\theta}(t)) + \int_t^{u \wedge \sigma_{\theta}(t)} e^{-\delta s} \Big[R_{y}(X(s),\sup_{0 \leq u < s}l(u) \wedge \theta(u)) - \delta\Big]\,ds\bigg\}_{u \geq t}
\eeq
is an $\mathcal{F}_{u \wedge \sigma_{\theta}(t)}$-martingale, and therefore an $\mathcal{F}_u$-martingale.
Moreover, since $\theta(s) \geq \sup_{0 \leq u < s}(l(u) \wedge \theta(u)) \geq \sup_{t \leq u < s}(l(u) \wedge \theta(u)) = \theta(s)$, for any $s \in (t,\sigma_{\theta}(t))$, the integrand in (\ref{supermartingalaS}) is equal to $e^{-\delta s} [R_{y}(X(s),\theta(s)) - \delta]$ and it is nonnegative by Lemma \ref{piprimoer}. Therefore, by uniqueness of the Doob-Meyer decomposition, we may conclude that 
\beq
\label{dlambdaidentification0}
dA(\nu_*)(s)= e^{-\delta s} \Big[R_{y}(X(s),\theta(s)) - \delta\Big]\,ds,\quad s \in [t,\sigma_{\theta}(t)),\,\,t \geq 0.
\eeq 
It follows that $A(\nu_*)$ increases only on the set $\{s \geq 0: l(\omega, s) > \theta(\omega, s+)\}$ (a subset of $\{s \geq 0: \nu_{*}(\omega, s) = \theta(\omega, s)\}$), i.e.\ at those times $s$ such that $s=\rho_{\theta}(s)$ a.s., and hence we may write the Lagrange multiplier $d\lambda = dA(\nu_*)$ as
\beq
\label{dlambdaidentification}
d\lambda(s) = e^{-\delta s} \Big[R_{y}(X(s),\theta(s)) - \delta\Big]\mathds{1}_{\{l(\cdot) > \theta(\cdot\, +)\}}(s)\,ds.
\eeq 

In conclusion, given $\nu_{*}$ as in (\ref{dynamicoptimalsol}), we have shown $\mathbb{E}\{\,\int_{\tau}^{\infty} e^{-\delta s} R_{y}\left(X(s),\nu_{*}(s)\right) ds\,| \mathcal{F}_{\tau}\}$ $\leq e^{-\delta{\tau}} + \mathbb{E}\{\,\int_{\tau}^{\infty} d\lambda(s)| \mathcal{F}_{\tau}\}$, with $d\lambda = dA(\nu_{*})$, and with equality if and only if $\tau$ is a time of increase for $\nu^{*}(\cdot)$. It follows that (\ref{dynamicCOND1})--(\ref{dynamicCOND4}) hold and hence the process (\ref{dynamicoptimalsol}) is optimal by Theorem \ref{KTEuropa}. Moreover, we have proved that the paths of $A(\nu_{*})(t)$ (and hence of $d\lambda$) are absolutely continuous with respect to the Lebesgue measure and have Radon-Nykodym derivative $e^{-\delta t} [R_{y}(X(t),\theta(t))-\delta] \mathds{1}_{\{l(\cdot)>\theta(\cdot\, +)\}}(t)$.
\end{proof}

The argument of the proof of Theorem \ref{dynamictheorem} allows an enlightening interpretation of the first order conditions in Bank \cite{Bank}.
Recall that $\mathbb{S}(\nu)$ is the Snell envelope of the supergradient $\nabla_{y}\mathcal{J}(\nu)$, i.e.
\beq
\label{Snell}
\mathbb{S}(\nu)(t) = \esssup_{t \leq \tau \leq +\infty} \mathbb{E}\left\{\nabla_{y}\mathcal{J}(\nu)(\tau)|\mathcal{F}_t\right\}.
\eeq
Then Bank \cite{Bank}, Theorem $2.2$, shows that the optimal investment plan $\nu_{*}$ is characterized by the following conditions
\beq
\label{FOCinBank}
\left\{
\begin{array}{ll}
\nu_{*}\,\,\mbox{is flat off}\,\,\{\nabla_{y}\mathcal{J}(\nu_{*})= \mathbb{S}(\nu_{*})\}  \\
A(\nu_{*})\,\,\mbox{is flat off}\,\,\{\nu_{*}= \theta\},
\end{array}
\right.
\eeq
where $A(\nu_{*})$ is the predictable increasing process in the Doob-Meyer decomposition of the supermartingale $\mathbb{S}(\nu_{*})$.
Here we have shown that $d\lambda \equiv dA(\nu_{*})$ and therefore that the second first order condition of (\ref{FOCinBank}) coincides with the Kuhn-Tucker condition (\ref{dynamicCOND4}); that is, the Lagrange multiplier can grow only when the constraint is binding. That generalizes in our stochastic, infinite dimensional setting the usual rule one has in optimization under inequality constraint.

\begin{Remark}
The usual interpretation of the Lagrange multiplier as the shadow price of the value function may be heuristically shown as follows. After an integration by parts on the cost term, we may write the value function as
$$V(\theta)=\mathbb{E}\left\{\,\int_{0}^{\infty} e^{-\delta t}\Big[R(X(t),y \vee \sup_{0\leq s < t}(l(s) \wedge \theta(s))) -\delta (y \vee \sup_{0\leq s < t}(l(s) \wedge \theta(s)))\Big]dt\right\}.$$ Now, if $\nu_{*}(t)= y \vee \sup_{0\leq s < t}(l(s) \wedge \theta(s))$, then the derivative (in some sense) of $\nu_{*}$ with respect to $\theta$ does vanish off $\{\nu_{*}=\theta\}$. We thus expect that the `derivative' of $V$ with respect to the constraint $\theta$ is $e^{-\delta t}[R_{y}(X(t),\theta(t)) -\delta]$ once the constraint is binding, which has exactly the form of the density of our Lagrange multiplier (\ref{dlambdaidentification}). From an economic point of view, $e^{-\delta t}[R_{y}(X(t),\theta(t)) -\delta]$ is the marginal profit, net of the user cost of capital, one has at time $t$ if the productive capacity is $\theta(t)$. Therefore, it is exactly the willingness to pay at time $t$ for relaxing the resources constraint.
\end{Remark}

%%%%%%%%%%%%%%%%%%%%%%%%%%%%%%%%%%%%%%%%%%%%%%%%%%%%%%%%%%%%%%%%%%%%%%%%%%%%%%%%%%%%%%%%%%%%%%%%%%%%%%%%%%%%%%%%%%%%%%%%%%%%%%%%%%%%%%%%%%%%%%%%%%%%%%%%%%%%%%%%%%%%%%%%%%%%%%%%%%%%%%%%%%%%%%%%%%%%%%%%%%%%%%%%%%%%%%%%%%%%%%%%%%%%%

\subsection{Constant Finite Fuel and Quadratic Cost}
\label{ExBenes}

Here we consider a monotone follower problem with constant finite fuel similar to those studied by Karatzas (\cite{Karatzas81}, \cite{Karatzas85}), and Karatzas and Shreve \cite{KaratzasShreve84} (among others). 
In particular, we discuss the example (cf.\ Bank \cite{Bank}, Section $4.1$) of optimal cost minimization for a firm that does not incur into investment's costs and has a running cost flow given by the convex function $c(x,y)=\frac{1}{2}(x-y)^2$ of the economic shock $x$ and the investment $y$. That is, we study the constrained convex minimization problem
\beq
\label{costoconvesso}
\inf_{\nu \in \mathcal{S}_{\theta_o}}\mathcal{C}(\nu) := \inf_{\nu \in \mathcal{S}_{\theta_o}}\mathbb{E}\bigg\{\,\int_0^{\infty} \delta e^{-\delta s}\frac{1}{2}(W(t) - \nu(t))^2 \,dt \bigg\}
\eeq
where $W(t)$ is a standard Brownian motion and $\theta_o$ is the positive constant finite fuel such that $\nu(t) \leq \theta_o$, $\mathbb{P}$-a.s.\ for all $t\geq 0$.
From Bank \cite{Bank} we know that the optimal investment policy is 
\beq
\label{optcostoconvesso}
\nu_{*}(t) = \sup_{0 \leq s < t}\left((W(s) - c) \wedge \theta_o\right) \vee \nu(0),
\eeq
which is the well known strategy of reflecting the Brownian motion at some threshold $c$ until all the fuel is spent (cf.\ Karatzas \cite{Karatzas85}). To identify $c$ notice that 
\beq
\label{gradientecostoconvesso}
\nabla_{y}\mathcal{C}(\nu)(t) = \mathbb{E}\bigg\{\,\int_t^{\infty} \delta e^{-\delta s}(\nu(s) - W(s)) \,ds\,\Big| \mathcal{F}_t \bigg\},
\eeq
and that the backward equation
\beq
\label{backBenes}
\mathbb{E}\bigg\{\,\int_{\tau}^{\infty} \delta e^{-\delta s}\sup_{\tau \leq u < s} l(u) \,ds\,\Big| \mathcal{F}_{\tau} \bigg\} = e^{-\delta \tau}W(\tau),\,\,\,\,\,\forall \tau \in \mathcal{T},
\eeq
is uniquely solved by the base capacity
\beq
\label{backBenes2}
l(s) = W(s) - c,
\eeq
where $c$ is the positive constant $c:=\mathbb{E}\{\,\int_0^{\infty}\delta e^{-\delta s} \sup_{0\leq u < s} W(u) \,ds\},$ by independence and time-homogeneity of Brownian increments.

For this problem we expect to find a nonpositive Lagrange multiplier.
We write the subgradient (\ref{gradientecostoconvesso}) at $\nu_{*}$ as
\begin{eqnarray*}
\label{gradientecostoconvesso2}
\nabla_{y}\mathcal{C}(\nu_{*})(t)&\hspace{-0.25cm} = \hspace{-0.25cm}& \mathbb{E}\bigg\{\,\int_t^{\infty} \delta e^{-\delta s}\left(\nu_{*}(s) - W(s)\right) \,ds\,\Big| \mathcal{F}_t \bigg\} - 0 
\end{eqnarray*}
and we use (\ref{backBenes}) with $l$ given by (\ref{backBenes2}) to have
\beq
\nabla_{y}\mathcal{C}(\nu_{*})(t) = \mathbb{E}\bigg\{\,\int_t^{\infty} \delta e^{-\delta s}\Big[\nu_{*}(s) - \sup_{t \leq u < s}(W(u) - c)\Big] \,ds\,\Big| \mathcal{F}_t \bigg\}.
\eeq
This trivial trick puts us in the same setting as Bank \cite{Bank}, proof of Theorem 3.1 (see also the first part of the proof of our Theorem \ref{dynamictheorem} above). Hence we have that the Snell envelope of the subgradient evaluated at the optimum $\nu_{*}$ (cf.\ (\ref{optcostoconvesso})) is 
\beq
\label{stimaSnellcostoconvesso}
\mathbb{S}(\nu_{*})(t) = \mathbb{E}\bigg\{\,\int_{\rho_{\theta_o}(t)}^{\infty} \delta e^{-\delta s}\Big[\theta_o - \sup_{t \leq u < s}(W(u) - c)\Big] \,ds\,\Big| \mathcal{F}_t \bigg\} \nonumber
\eeq
with 
\beq
\label{taucostoconvesso}
\rho_{\theta_o}(t):=\inf\{ s \geq t : W(u) - c > \theta_o \},
\eeq
by means of (\ref{optcostoconvesso}).
Notice that $\rho_{\theta_o}(t)$ is a time of increase for $\sup_{t \leq u < s}(W(u)-c)$, $s > \rho_{\theta_o}(t)$. Hence we have $\sup_{t \leq u < s}(W(u) - c)=\sup_{\rho_{\theta_o}(t) \leq u < s}(W(u) - c)$ for $s \in (\rho_{\theta_o}(t),+\infty]$. Therefore (\ref{backBenes}) implies
\beq
\label{stimaSnellcostoconvessobis}
\mathbb{S}(\nu_{*})(t)= \mathbb{E}\bigg\{\,\int_{\rho_{\theta_o}(t)}^{\infty} \delta e^{-\delta s}\,\theta_o\, ds\,\Big| \mathcal{F}_t \bigg \} - \mathbb{E}\Big\{\,e^{-\delta \rho_{\theta_o}(t)}W(\rho_{\theta_o}(t))\,\Big| \mathcal{F}_t\Big\}; \nonumber
\eeq
that is,
\beq
\label{stimaSnellcostoconvesso2}
\mathbb{S}(\nu_{*})(t)= \mathbb{E}\Big\{\,e^{- \delta \rho_{\theta_o}(t)} \Big[\theta_o - W(\rho_{\theta_o}(t))\Big]\,\Big| \mathcal{F}_t \Big\}.
\eeq

Now we identify the Lagrange multiplier of problem (\ref{costoconvesso}), that is the compensator part $dA$ of the Doob-Meyer decomposition of $\mathbb{S}(\nu_{*})$.
Given an arbirary $t \geq 0$, we start by showing that $\mathbb{S}(\nu_{*})$ is a martingale until $W - c$ is below the finite fuel $\theta_o$; that is, $\{\mathbb{S}(\nu_{*})(u \wedge \tau_{\theta_0}(t))\}_{u \geq t}$ is an $\mathcal{F}_u$-martingale.
In fact, by Revuz and Yor \cite{RevuzYor}, Corollary $3.6$, it suffices to prove that $\{\mathbb{S}(\nu_{*})(u \wedge \tau_{\theta_0}(t))\}_{u \geq t}$ is an $\mathcal{F}_{u \wedge \rho_{\theta_o}(t)}$-martingale, and that follows by iterated conditioning and the fact that $\rho_{\theta_o}(u_1 \wedge \rho_{\theta_o}(t))= \rho_{\theta_o}(u_2 \wedge \rho_{\theta_o}(t))$ for all $u_1$, $u_2 \in [t, \infty)$. 
Next, we define 
$$\sigma_{\theta_o}(t):=\inf\{s > t: W(s)\leq \theta_o + c\},$$ and we show that $\mathbb{S}(\nu_{*})$ is a submartingale when the base capacity $l$ is above the finite fuel $\theta_o$ (this is a subset of $\{\nu_{*}=\theta_o\}$); that is $\{\mathbb{S}(\nu_{*})(u \wedge \sigma_{\theta_o}(t))\}_{u \geq t}$ is an $\mathcal{F}_u$-submartingale. Again, as above, it suffices to prove that it is an an $\mathcal{F}_{u \wedge \sigma_{\theta_o}(t)}$-submartingale. In fact, from $\rho_{\theta_o}(u \wedge \sigma_{\theta_o}(t)) = u \wedge \sigma_{\theta_o}(t)$ for all $u \geq t$, the martingale property of $W$ and $\delta e^{-\delta s}W(s)ds = - d(e^{-\delta s}W(s)) + e^{-\delta s}dW(s)$, follows that the process $\mathbb{S}(\nu_{*})(u \wedge \sigma_{\theta_o}(t)) + \int_t^{u \wedge \sigma_{\theta_o}(t)} \delta e^{-\delta s}(\theta_o - W(s)) ds$ is an $\mathcal{F}_{u \wedge \sigma_{\theta_o}(t)}$-martingale, hence an $\mathcal{F}_{u}$-martingale. Therefore $\mathbb{S}(\nu_{*})(u \wedge \sigma_{\theta_o}(t))$ is an $\mathcal{F}_u$-submartingale with absolutely continuous compensator $A(\nu_{*})$ given by
\beq
\label{compensatorSnellBenes}
dA(\nu_{*})(s):= -\delta e^{- \delta s}\left(\theta_o-W(s)\right) ds, \quad s \in [t, \sigma_{\theta_o}(t)),
\eeq
since $W(\cdot) > \theta_o$ on $[t, \sigma_{\theta_o}(t))$, $t \geq 0$.

Finally, as the Lagrange multiplier must be flat off $\{s\geq 0: \nu_{*}(\omega,s)=\theta_o\}$ (cf.\ (\ref{dynamicCOND4}) and (\ref{dlambdaidentification})), we conclude that the Lagrange multiplier of problem (\ref{costoconvesso}) is
\beq
\label{LagrangemultBenes}
d\lambda(s)= \delta e^{- \delta s}\left[\theta_o-W(s)\right]\mathds{1}_{\{W(\cdot) -c > \theta_o\}}(s)\,ds,
\eeq
which is, as expected, coincides with the opposite of the optional measure $dA(\nu_{*})(t)$ (cf.\ (\ref{compensatorSnellBenes})).

%%%%%%%%%%%%%%%%%%%%%%%%%%%%%%%%%%%%%%%%%%%%%%%%%%%%%%%%%%%%%%%%%%%%%%%%%%%%%%%%%%%%%%%%%%%%%%%%%%%%%%%%%%%%%%%%%%%%%%%%%%%%%%%%%%%%%%%%%%%%%%%%%%%%%%%%%%%%%%%%%%%%%%%%%%%%%%%%%%%%%%%%%%%%%%%%%%%%%%%%%%%%%%%%%%%%%%%%%%%%%%%%%%%%%

\subsection{Constant Finite Fuel and Operating Profit of Cobb-Douglas Type}
\label{ExCD}

We consider the finite fuel version of the profit maximization problem, net of investment costs, of Riedel and Su \cite{RiedelSu} and we take the economic shock process $X(t)$ to be a Geometric Brownian motion 
\beq
\label{geoBM}
X(t) = x_0 e^{(\mu - \frac{1}{2}\sigma^2) t + \sigma W(t)} \qquad \mbox{with}\quad x_0 > 0.
\eeq
That is, we study
\beq
\label{Cobb1}
\sup_{\nu \in \mathcal{S}_{\theta_o}}\mathcal{J}(\nu):= \sup_{\nu \in \mathcal{S}_{\theta_o}}\mathbb{E}\bigg\{\,\int_0^{\infty}e^{-\delta s}R\left(X(s),\nu(s)\right) ds - \int_{0}^{\infty}e^{-\delta s} d\nu(s) \bigg\},
\eeq
where the controls satisfy $0 \leq \nu(t) \leq \theta_o$ $\mathbb{P}$-a.s., for all $t\geq 0$, with $\theta_o$ positive constant.
The firm's operating profit function is of the Cobb-Douglas type and depends on the economic shock $x$ and the investment policy $y$; i.e., $R\left(x,y\right) = \frac{1}{1 - \alpha} x^{\alpha}y^{1 - \alpha}$ with $0 < \alpha < 1$. As pointed out in \cite{RiedelSu} this construction is consistent with a competitive firm which produces at decreasing returns to scale or with a monopolist firm facing a constant elasticity demand function and constant returns to scale production.
Notice that optimization problem (\ref{Cobb1}) in the case of infinite fuel has also been studied in a diffusive setting in \cite{Kobila} by a dynamic programming approach.

It is known (cf.\ Bank \cite{Bank} and Riedel and Su \cite{RiedelSu}) that the unique optimal solution for problem (\ref{Cobb1}) is given by
\beq
\label{optCobb}
\nu_{*}(t) = \sup_{0 \leq s < t}\left(l(s) \wedge \theta_o \right) \vee \nu(0),
\eeq
where the optional process $l(t)$ uniquely solves the stochastic backward equation (cf.\ Bank and El Karoui \cite{BankElKaroui})
\beq
\label{backCobb}
\mathbb{E}\bigg\{\,\int_{\tau}^{\infty}e^{-\delta s} X^{\alpha}(s)\big(\sup_{\tau \leq u < s}l(u)\big)^{- \alpha} ds\,\Big | \mathcal{F}_{\tau}\bigg\} = e^{-\delta \tau},\,\,\,\,\,\forall \tau \in \mathcal{T}.
\eeq
As shown in Riedel and Su \cite{RiedelSu}, Proposition $7.1$, when the shock process is of exponential Levy type, i.e. $X(t) = x_0 e^{Y(t)}$, with $Y(t)$ a Levy process such that $Y(0)=0$, then the solution of (\ref{backCobb}) is given by the base capacity
\beq
\label{baseCobb}
l(t) = k X(t),
\eeq
where
$k= (\frac{1}{\delta }\mathbb{E}\{e^{\alpha \underline{Y}(\tau(\delta))}\})^{\frac{1}{\alpha}},$
$\underline{Y}(t): = \inf_{0 \leq u \leq t}Y(u)$ and $\tau(\delta)$ is an independent exponentially distributed time with parameter $\delta$.

From (\ref{Cobb1}) we have
\beq
\label{gradientCobb}
\nabla_{y}\mathcal{J}(\nu)(t) = \mathbb{E}\bigg\{\,\int_t^{\infty}e^{-\delta s} X^{\alpha}(s)\nu^{- \alpha}(s) ds\,\Big | \mathcal{F}_t \bigg\} - e^{-\delta t}.
\eeq
Following Bank \cite{Bank}, proof of Theorem 3.1, (see also the first part of the proof of our Theorem \ref{dynamictheorem} above) we know that the Snell envelope of supergradient (\ref{gradientCobb}) evaluated at the optimal control policy (\ref{optCobb}) is 
\begin{eqnarray}
\label{SnellCobb2}
\mathbb{S}(\nu_{*})(t) &\hspace{-0.25cm} = \hspace{-0.25cm}& \mathbb{E}\bigg\{\,\int_{\rho_{\theta_o}(t)}^{\infty}e^{-\delta s} \Big[X^{\alpha}(s)\Big(({\theta_o})^{- \alpha} - (\sup_{t \leq u < s}kX(u))^{- \alpha}\Big) \Big] \,ds\,\Big | \mathcal{F}_t \bigg \} \nonumber \\
&\hspace{-0.25cm}  = \hspace{-0.25cm}& {(\theta_o)}^{- \alpha}\mathbb{E}\bigg\{\,\int_{\rho_{\theta_o}(t)}^{\infty}e^{-\delta s} X^{\alpha}(s)\,ds\,\Big | \mathcal{F}_t \bigg\} - \mathbb{E}\Big\{ e^{-\delta\rho_{\theta_o}(t)}\,\Big| \mathcal{F}_t \Big\}, \nonumber
\end{eqnarray}
with
\beq
\label{tauCobb}
\rho_{\theta_o}(t):= \inf \{ s \geq t :  kX(s) > \theta_o \},
\eeq
and where we have used (\ref{backCobb}) to obtain the second equality. 
\begin{Lemma}
\label{SnellCobb3}
Assume $\delta > \mu + \sigma^2$. Then for every $t \geq 0$, one has
\beq
\label{newCD}
\mathbb{E}\bigg\{\,\int_{\rho_{\theta_o}(t)}^{\infty}e^{-\delta s} X^{\alpha}(s)\,ds\,\Big | \mathcal{F}_t \bigg\} = \frac{1}{(\delta - \mu\alpha) + \frac{1}{2}\sigma^2\alpha(1 - \alpha)}\mathbb{E}\Big\{e^{-\delta \rho_{\theta_o}(t)}X^{\alpha}(\rho_{\theta_o}(t))\,\Big| \mathcal{F}_t  \Big\}.
\eeq
\end{Lemma}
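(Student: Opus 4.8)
The plan is to evaluate the left-hand side of (\ref{newCD}) in closed form by exploiting the strong Markov property of the geometric Brownian motion (\ref{geoBM}) at the stopping time $\rho := \rho_{\theta_o}(t)$ of (\ref{tauCobb}), together with the explicit exponential scaling of its increments. Set $\gamma := (\delta - \mu\alpha) + \frac12\sigma^2\alpha(1-\alpha)$, the constant appearing in the denominator of (\ref{newCD}). The one elementary computation underlying everything is the Laplace-type identity $\mathbb{E}\{e^{-\delta r}X^{\alpha}(r)\} = x_0^{\alpha}e^{-\gamma r}$, $r\geq 0$, which is immediate from (\ref{geoBM}) and $\mathbb{E}\{e^{\alpha\sigma W(r)}\}=e^{\frac12\alpha^2\sigma^2 r}$. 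The assumption $\delta > \mu + \sigma^2$ enters precisely here: since $\delta>0$ and $0<\alpha<1$ it guarantees $\gamma>0$ (indeed $\gamma\geq \delta-\mu\alpha\geq \delta-\mu>0$ when $\mu\geq 0$, while $-\mu\alpha>0$ when $\mu<0$), hence $\int_0^{\infty}e^{-\gamma r}\,dr = 1/\gamma<\infty$; it also forces the deterministic exponent $\alpha(\mu-\frac12\sigma^2)-\delta$ of $e^{-\delta s}X^{\alpha}(s)$ to be strictly negative, so that $e^{-\delta s}X^{\alpha}(s)\to 0$ $\mathbb{P}$-a.s.\ as $s\to\infty$.

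First I would note that $\rho_{\theta_o}(t)$ is an $\{\mathcal{F}_s\}$-stopping time (the debut of the open set $\{kX>\theta_o\}$ after time $t$, using that the usual conditions hold) with $\rho\geq t$, and reduce the claim to computing $\mathbb{E}\{\int_{\rho}^{\infty}e^{-\delta s}X^{\alpha}(s)\,ds\,|\,\mathcal{F}_{\rho}\}$: once this is known, taking $\mathbb{E}\{\,\cdot\,|\,\mathcal{F}_t\}$ and using $\mathcal{F}_t\subseteq\mathcal{F}_{\rho}$ together with the tower property yields (\ref{newCD}). On the event $\{\rho<\infty\}$, the strong Markov property of $W$ says that $\widetilde{W}(r):=W(\rho+r)-W(\rho)$, $r\geq 0$, is a Brownian motion independent of $\mathcal{F}_{\rho}$, and (\ref{geoBM}) gives $X(\rho+r)=X(\rho)\,e^{(\mu-\frac12\sigma^2)r+\sigma\widetilde{W}(r)}$ with $X(\rho)$ being $\mathcal{F}_{\rho}$-measurable. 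Substituting $s=\rho+r$ and invoking Tonelli's theorem (the integrand is nonnegative) I obtain
\[ \mathbb{E}\Big\{\int_{\rho}^{\infty}e^{-\delta s}X^{\alpha}(s)\,ds\,\Big|\,\mathcal{F}_{\rho}\Big\} = e^{-\delta\rho}X^{\alpha}(\rho)\int_0^{\infty}\mathbb{E}\Big\{e^{-\delta r}e^{\alpha(\mu-\frac12\sigma^2)r+\alpha\sigma\widetilde{W}(r)}\Big\}\,dr = e^{-\delta\rho}X^{\alpha}(\rho)\int_0^{\infty}e^{-\gamma r}\,dr = \frac{1}{\gamma}\,e^{-\delta\rho}X^{\alpha}(\rho) \]
on $\{\rho<\infty\}$, while on $\{\rho=\infty\}$ the left-hand side is $0$ and one adopts the convention $e^{-\delta\rho}X^{\alpha}(\rho):=\limsup_{s\to\infty}e^{-\delta s}X^{\alpha}(s)=0$, so that the displayed identity persists. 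Conditioning on $\mathcal{F}_t$ then gives (\ref{newCD}).

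I do not anticipate a genuine obstacle; the only delicate points are bookkeeping ones. First, one must check $\gamma>0$ from $\delta>\mu+\sigma^2$ as indicated above. Second, the event $\{\rho_{\theta_o}(t)=\infty\}$, which may carry positive probability (when $X$ stays below $\theta_o/k$ forever), has to be dealt with: there both sides of (\ref{newCD}) vanish, using $e^{-\delta s}X^{\alpha}(s)\to0$ a.s. An entirely equivalent route, more in the spirit of the Ito-based computations elsewhere in this section, is to apply Ito's formula to $Z(s):=e^{-\delta s}X^{\alpha}(s)$, getting $dZ(s)=-\gamma Z(s)\,ds+\alpha\sigma Z(s)\,dW(s)$, to observe that $Z(s)+\gamma\int_0^s Z(r)\,dr$ is a true martingale (its quadratic variation has finite expectation on compacts, since $\mathbb{E}\{Z(s)^2\}<\infty$ for each $s$), to apply optional sampling at $\rho\wedge u\leq u$, and to let $u\uparrow\infty$, using monotone convergence on the resource integral and $\mathbb{E}\{Z(u)\,|\,\mathcal{F}_{\rho}\}=Z(\rho)e^{-\gamma(u-\rho)}\to0$ on the boundary terms; this again yields $\mathbb{E}\{\int_{\rho}^{\infty}Z(s)\,ds\,|\,\mathcal{F}_{\rho}\}=Z(\rho)/\gamma$ and hence (\ref{newCD}).
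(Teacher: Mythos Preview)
Your proposal is correct and follows essentially the same route as the paper: condition first on $\mathcal{F}_{\rho_{\theta_o}(t)}$, invoke the strong Markov property of $W$ to factor out $e^{-\delta\rho}X^{\alpha}(\rho)$, and evaluate the remaining Gaussian integral $\int_0^{\infty}e^{-\gamma r}\,dr=1/\gamma$. You are in fact more careful than the paper in explicitly checking $\gamma>0$ from the hypothesis $\delta>\mu+\sigma^2$ and in treating the event $\{\rho_{\theta_o}(t)=\infty\}$; the It\^o alternative you sketch at the end is a nice bonus but not needed.
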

\begin{proof}
The proof follows from the Markov property and the Laplace transform of a Gaussian process.
Independence of Brownian increments, together with $W(u + \rho_{\theta_o}(t)) - W(\rho_{\theta_o}(t)) \sim W(u)$, allow us to write
\begin{eqnarray*}
\label{provaSnellCobb3}
\lefteqn{\mathbb{E}\bigg\{\,\int_{\rho_{\theta_o}(t)}^{\infty}e^{-\delta s} X^{\alpha}(s)\,ds\,\Big | \mathcal{F}_t \bigg\}
  =  \mathbb{E}\bigg\{\,\mathbb{E}\bigg\{\,\int_{\rho_{\theta_o}(t)}^{\infty}e^{-\delta s} X^{\alpha}(s)\,ds \,\Big| \mathcal{F}_{\rho_{\theta_o}(t)} \bigg\}\,\Big| \mathcal{F}_t \bigg\}} \nonumber \\  
& & =\mathbb{E}\bigg\{\,e^{-\delta \rho_{\theta_o}(t)} X^{\alpha}(\rho_{\theta_o}(t))
\mathbb{E}\bigg\{\,\int_{0}^{\infty}e^{-\delta u } e^{\alpha(\mu - \frac{1}{2}\sigma^2)u + \alpha\sigma (W(u +\rho_{\theta_o}(t)) - W(\rho_{\theta_o}(t)))} \,ds \bigg\}\,\Big| \mathcal{F}_t \bigg\} \nonumber \\
& & =\mathbb{E}\bigg\{\,e^{-\delta \rho_{\theta_o}(t)} X^{\alpha}(\rho_{\theta_o}(t))\mathbb{E}\bigg\{\,\int_{0}^{\infty}e^{-\delta u} e^{\alpha(\mu - \frac{1}{2}\sigma^2)u + \alpha\sigma W(u)} \,ds \bigg\}\,\Big| \mathcal{F}_t \bigg \} \\
& & =\mathbb{E}\bigg\{\,e^{-\delta \rho_{\theta_o}(t)} X^{\alpha}(\rho_{\theta_o}(t)) \int_{0}^{\infty}e^{- (\delta - \mu \alpha) u - \frac{1}{2}\sigma^2\alpha(1 - \alpha) u} du \,\Big| \mathcal{F}_t \bigg\}. \nonumber 
\end{eqnarray*}
Notice that $(\delta - \mu\alpha) + \frac{1}{2}\sigma^2\alpha(1 - \alpha) >0$ by the assumption, hence (\ref{newCD}) follows.
\end{proof}
Therefore 
\beq
\label{SnellCobb4}
\mathbb{S}(\nu_{*})(t)= \frac{({\theta_o})^{-\alpha}}{(\delta - \mu\alpha) + \frac{1}{2}\sigma^2\alpha(1 - \alpha)}\mathbb{E}\Big\{e^{-\delta \rho_{\theta_o}(t)}X^{\alpha}(\rho_{\theta_o}(t))\,\Big| \mathcal{F}_t \Big\} - \mathbb{E}\Big\{e^{-\delta \rho_{\theta_o}(t)}\,\Big| \mathcal{F}_t\Big\},
\eeq
by Lemma \ref{SnellCobb3} and (\ref{SnellCobb2}). 
Arguments similar to those used in Subsection \ref{ExBenes} allow us to identify the compensator part of its Doob-Meyer decomposition of $\mathbb{S}(\nu_{*})(t)$ as the Lagrange multiplier of problem (\ref{Cobb1}).
In fact, we have that $\mathbb{S}(\nu_{*})$ is an $\mathcal{F}_u$-martingale until $l$ is below the finite fuel $\theta_o$, since $\rho_{\theta_o}(u_1 \wedge \rho_{\theta_o}(t))= \rho_{\theta_o}(u_2 \wedge \rho_{\theta_o}(t))$ for all $u_1$, $u_2 \in [t, \infty)$. 
Then, if
$$\sigma_{\theta_o}(t):=\inf\{s > t: kX(s)\leq \theta_o \},$$ we show that $\mathbb{S}(\nu_{*})$ is an $\mathcal{F}_u$-supermartingale when the base capacity $l$ is above the finite fuel $\theta_o$; that is, $\{\mathbb{S}(\nu_{*})(u \wedge \sigma_{\theta_o}(t)\}_{u \geq t}$ is an $\mathcal{F}_u$-supermartingale, for any arbitrary but fixed $t \geq 0$. It suffices to prove that it is an $\mathcal{F}_{u \wedge \sigma_{\theta_o}(t)}$-supermartingale (cf.\ Revuz and Yor \cite{RevuzYor}, Corollary $3.6$). 
In fact, from $\rho_{\theta_o}(u \wedge \sigma_{\theta_0}(t)) = u \wedge \sigma_{\theta_o}(t)$ for all $u \geq t$ and $d(e^{-\delta s}X^{\alpha}(s)) = - \delta e^{-\delta s}X^{\alpha}(s)ds + e^{-\delta s}dX^{\alpha}(s)$ follows that the process $\mathbb{S}(\nu_{*})(u \wedge \sigma_{\theta_o}(t)) + \int_t^{u \wedge \sigma_{\theta_o}(t)} e^{-\delta s}(X^{\alpha}(s) (\theta_o)^{-\alpha} - \delta)ds$ is an $\mathcal{F}_{u \wedge \sigma_{\theta_o}(t)}$-martingale, hence an $\mathcal{F}_{u}$-martingale. 
On the other hand, we have $X^{\alpha}(s)(\theta_o)^{-\alpha} > {k}^{-\alpha}$ for $s \in [t, u \wedge \sigma_{\theta_o}(t))$ since $k= (\frac{1}{\delta }\mathbb{E}\{e^{\alpha \underline{Y}(\tau(\delta))}\})^{\frac{1}{\alpha}}$, $\underline{Y}(u): = \inf_{0 \leq s \leq u}\left[\left(\mu - \frac{1}{2}\sigma^2\right)s + \sigma W(s)\right]$, and $\mathbb{E}\{e^{\alpha \underline{Y}(\tau(\delta))}\}=\beta_{-}(\beta_{-} - \alpha)^{-1} < 1$ with $\beta_{-}$ the negative root of $\frac{1}{2}\sigma^2 x^2 + \left(\mu - \frac{1}{2}\sigma^2\right)x - \delta =0$ and $\alpha > 0$. Then $X^{\alpha}(s)({\theta_o})^{-\alpha} > \delta$ for all $s \in [t, u \wedge \sigma_{\theta_o}(t))$ and therefore, $\mathbb{S}(\nu_{*})(u \wedge \sigma_{\theta_o}(t))$ is an $\mathcal{F}_u$-supermartingale with absolutely continuous compensator $A(\nu_{*})$ given by
\beq
\label{Cobbcompensator}
dA(\nu_{*})(s):= e^{-\delta s}\left( X^{\alpha}(s)({\theta_o})^{-\alpha} - \delta \right)ds, \quad s \in [t,\sigma_{\theta_o}(t)),\,\,t \geq 0.
\eeq

Finally, as the Lagrange multiplier optional measure $d\lambda$ has support inside $\{t \geq 0: \nu_{*}(\omega,t)=\theta_o\}$, we conclude that for problem (\ref{Cobb1}) $d\lambda$ must be (cf.\ also (\ref{dlambdaidentification}))
\beq
\label{Cobbcompensator2}
d\lambda(t) = e^{- \delta t}\left( X^{\alpha}(t)({\theta_o})^{-\alpha} - \delta \right)\mathds{1}_{\{kX(\cdot) > \theta_o\}}(t)\,dt;
\eeq
that is, $d\lambda(t)$ coincides with the random measure $dA(\nu_{*})(t)$ (cf.\ (\ref{Cobbcompensator})).

%%%%%%%%%%%%%%%%%%%%%%%%%%%%%%%%%%%%%%%%%%%%%%%%%%%%%%%%%%%%%%%%%%%%%%%%%%%%%%%%%%%%%%%%%%%%%%%%%%%%%%%%%%%%%%%%%%%%%%%%%%%%%%%%%%%%%%%%%%%%%%%%%%%%%%%%%%%%%%%%%%%%%%%%%%%%%%%%%%%%%%%%%%%%%%%%%%%%%%%%%%%%%%%%%%%%%%%%%%%%%%%%%%%%%

\section{Solving a New Model with $N$ Firms and Cobb-Douglas Profits}
\label{CobbDouglasSocialPlannercase}

An extension of the model in Subsection \ref{ExCD} to the multivariate case with stochastic, time-dependent finite fuel is completely solved in this Section.
For a market with $T=+\infty$ and $N$ firms endowed with operating profit functions of Cobb-Douglas type, i.e.\ $R^{(i)}\left(x,y\right)=\frac{x^{\alpha_i}\,y^{1-\alpha_i}}{1-\alpha_i}$ with $\alpha_i \in (0,1)$, $i=1,2,...,N$, we find the solution of the Social Planner optimal investment problem (\ref{problemaEuropa}) and the explicit form of the Lagrange multiplier optional measure $d\lambda$. Even in this case we have that $d\lambda$ has a density with respect to the Lebesgue measure.

Assume the economic shock process $X(t)$ of the form $X(t) = \exp{\{bt + \sigma W(t)\}}$ for some one-dimensional standard Brownian motion $W(t)$ and $b,\sigma \in \mathbb{R}$. Then (cf.\ also Riedel and Su \cite{RiedelSu}, Proposition $7.1$) the unique optional solution of the stochastic backward equation
\beq
\label{BackCD2firms}
\mathbb{E}\bigg\{\,\int_{\tau}^{\infty} e^{- \delta s} R^{(i)}_{y}(X(s),\sup_{\tau \leq u < s}l^{(i)}(u))\, ds\,\Big | \mathcal{F}_{\tau}\bigg\} = e^{-\delta \tau}, \qquad \forall \tau \in \mathcal{T},
\eeq
is
\beq
\label{twoCD2}
l^{(i)}(t) = k_i X(t),\qquad i=1,2,...,N,
\eeq
with
$$k_i = \left(\mathbb{E}\bigg\{ \int_0^{+\infty} e^{-\delta t} e^{\alpha_i \inf_{0 \leq u < t} b u + \sigma W(u)} dt \bigg\}\right)^{\frac{1}{\alpha_i}} = \left[\frac{1}{\delta}\Big(\frac{\gamma_{-}}{\gamma_{-} - \alpha_i}\Big)\right]^{\frac{1}{\alpha_i}},\quad i=1,2,...,N,$$
where $\gamma_{-}$ is the negative root of $\frac{1}{2}\sigma^2 x^2 + b x - \delta = 0$ (cf.\ Bertoin \cite{Bertoin}, Chapter VII).

Define the optional process 
\beq
\label{beta}
\beta_i(t) := \frac{l^{(i)}(t)}{\sum_{j=1}^N l^{(j)}(t)}.
\eeq
Here $\beta_i(t)$ may be thought as the fraction of desirable investment of the $i$-th firm at time $t$.
By (\ref{twoCD2}), for $t \geq 0$ and $i=1,2,...,N,$ we have that $\beta_i(t)$ is constant in time; in fact
$\beta_i(t) = \frac{k_i}{\sum_{j=1}^N k_j} =: \beta_i.$
Fix $\tau \in \mathcal{T}$ and introduce the random times
\beq
\label{twotimesCD}
\left\{
\begin{array}{ll}
\sigma_1(\tau) = \inf\{s \geq \tau : \sum_{i=1}^N l^{(i)}(s) > \theta(s+)\} \\ \\
\sigma_2(\tau) = \inf\{s \geq \tau : l^{(i)}(s) > \beta_i \theta(s+),\,\,\,\forall i=1,2,...,N\}. 
\end{array}
\right.
\eeq

\begin{Lemma}
\label{tausigma}
For all $\tau \in \mathcal{T}$ we have $\sigma_1(\tau) = \sigma_2(\tau)$ $\mathbb{P}$-almost surely.
\end{Lemma}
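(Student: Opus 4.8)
The plan is to exploit the fact that, in this Cobb--Douglas setting, all the base capacities are deterministic multiples of one and the same process. By (\ref{twoCD2}) we have, $\mathbb{P}$-a.s.\ and simultaneously for all $t \geq 0$ and $i=1,2,\dots,N$ (the optional solutions of (\ref{BackCD2firms}) being unique up to indistinguishability, and there being only finitely many $i$), $l^{(i)}(t) = k_i X(t)$ with $k_i > 0$ and $X(t) = \exp\{bt+\sigma W(t)\} > 0$. I would fix a path $\omega$ in this full-measure event and argue pathwise.

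Set $\Sigma := \sum_{j=1}^N k_j > 0$, so that $\beta_i = k_i/\Sigma$ and hence $\beta_i/k_i = 1/\Sigma$ is the \emph{same} for every $i$. The key observation is then the elementary equivalence, valid for every $s \geq 0$ and every $i$,
\[
l^{(i)}(s) > \beta_i\,\theta(s+) \iff k_i X(s) > \tfrac{k_i}{\Sigma}\,\theta(s+) \iff \Sigma\, X(s) > \theta(s+),
\]
obtained by cancelling $k_i > 0$ and multiplying through by $\Sigma$. Thus the $N$ conditions in the definition of $\sigma_2(\tau)$ all coincide, and
\[
\{s \geq \tau :\ l^{(i)}(s) > \beta_i\,\theta(s+)\ \ \forall i\} = \{s \geq \tau :\ \Sigma X(s) > \theta(s+)\}.
\]
On the other hand $\sum_{i=1}^N l^{(i)}(s) = \big(\sum_{i=1}^N k_i\big) X(s) = \Sigma X(s)$, so the set of times entering $\sigma_1(\tau)$ is $\{s \geq \tau : \Sigma X(s) > \theta(s+)\}$ as well. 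The two defining sets being pathwise identical, their infima agree, i.e.\ $\sigma_1(\tau) = \sigma_2(\tau)$ on the chosen full-measure event, which is the claim.

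There is no real obstacle here: the argument is a one-line algebraic cancellation made possible by the proportionality $l^{(i)} = k_i X$, and the only point worth spelling out is that this proportionality, together with the finiteness of the index set, lets us work on a single null-complement; the ``$\mathbb{P}$-a.s.'' in the statement is merely inherited from the a.s.\ validity of the representation (\ref{twoCD2}).
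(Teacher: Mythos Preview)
Your argument is correct and is essentially the paper's own proof: both reduce the defining conditions of $\sigma_1(\tau)$ and $\sigma_2(\tau)$ to the single inequality $X(s) > \theta(s+)/\sum_{j} k_j$ by using $l^{(i)} = k_i X$ and cancelling $k_i$. The paper states this in one line, while you spell out the null-set bookkeeping, but there is no substantive difference.
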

\begin{proof}
Notice that (\ref{twoCD2}) implies $\sigma_1(\tau)=\inf\{s \geq \tau:  X(s) > \frac{\theta(s+)}{\sum_{j=1}^N k_j}\} = \inf\{s \geq \tau:  k_i X(s) > \beta_i \theta(s+),\,\,\,\forall i=1,2,...,N\}=\sigma_2(\tau)$.
\end{proof}

\begin{Remark}
\label{bothinvest}
If $\tau \in \mathcal{T}$ is a time of investment for all firms (that is, $\tau$ is a point of increase for $\nu^{(i)}_{*}(\cdot)$ and therefore $d\nu^{(i)}_{*}(\tau)>0$ for all $i$), then the second Kuhn-Tucker condition in (\ref{KT1}) guarantees that
\begin{equation*}
\label{equalityCD}
\mathbb{E}\bigg\{\,\int_{\tau}^{+\infty} e^{-\delta s}R^{(i)}_{y}(X(s),\nu^{(i)}_{*}(s))\,ds\,\Big|\,\mathcal{F}_{\tau}\,\bigg\} = \mathbb{E}\bigg\{\,\int_{\tau}^{+\infty} e^{-\delta s}R^{(j)}_{y}(X(s),\nu^{(j)}_{*}(s))\,ds\,\Big|\,\mathcal{F}_{\tau}\,\bigg\}.
\end{equation*}
\end{Remark}

\begin{Theorem}
\label{dynamictheoremCD}
The process $\underline{\nu}_{*}$ with components
\beq
\label{twooptimalCD}
\nu^{(i)}_{*}(t) = \sup_{0 \leq u < t}(l^{(i)}(u) \wedge \beta_i \theta(u)) \vee y^{(i)},\qquad i=1,2,...,N,
\eeq
is optimal for problem (\ref{problemaEuropa}). Moreover, the Lagrange multiplier $d\lambda(t)$ associated to (\ref{problemaEuropa}) is absolutely continuous with respect to the Lebesgue measure.
\end{Theorem}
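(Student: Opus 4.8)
The plan is to verify, for the candidate $\underline{\nu}_{*}$ of (\ref{twooptimalCD}), the generalized stochastic Kuhn--Tucker conditions (\ref{KT1}) of Theorem \ref{KTEuropa} together with an explicit nonnegative optional measure $d\lambda$; optimality and absolute continuity then follow. First I would rewrite the candidate in a form that makes the common structure transparent: since $l^{(i)}(u)\wedge\beta_i\theta(u)=\beta_i\big[(\sum_{j=1}^N k_j)X(u)\wedge\theta(u)\big]$ by (\ref{twoCD2}), one has $\nu^{(i)}_{*}(t)=\beta_i\,Z(t)\vee y^{(i)}$ with
$$Z(t):=\sup_{0\le u<t}\Big[\Big(\sum_{j=1}^N k_j\Big)X(u)\wedge\theta(u)\Big]$$
the \emph{same} nondecreasing, left-continuous process for all firms. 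Admissibility $\underline{\nu}_{*}\in\mathcal{S}_{\theta}$ is then immediate, since $Z(t)\le\theta(t)$ yields $\sum_{i=1}^N\nu^{(i)}_{*}(t)=Z(t)\le\theta(t)$ as soon as $\beta_iZ(t)\ge y^{(i)}$ for all $i$ (the initial regime being covered by $\sum_i y^{(i)}\le\theta_o$), and each $\nu^{(i)}_{*}$ increases only at times of increase of $Z$, so all firms invest at the same times. As in Lemma \ref{tausigma} one checks that $\rho(\tau):=\inf\{s\ge\tau:\sum_j l^{(j)}(s)>\theta(s+)\}$ does not depend on $i$, and that on the binding set $\mathcal{A}:=\{s:\sum_j l^{(j)}(s)>\theta(s+)\}$ one has $\nu^{(i)}_{*}(s)=\beta_i\theta(s)$, whence $\sum_{i}\nu^{(i)}_{*}(s)=\theta(s)$.

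Next I would compute, for each $i$, the Snell envelope of the supergradient $\nabla_{y}\mathcal{J}_i(\nu^{(i)}_{*})$ by repeating verbatim the argument in the proof of Theorem \ref{dynamictheorem} (split $\int_\tau^\infty$ at $\rho(\tau)$, use the base-capacity equation (\ref{BackCD2firms}) at $\tau$ and at the stopping time $\rho(\tau)$, replace the base-capacity integrand by $\delta$ on $[\rho(\tau),\infty)$), obtaining
$$\nabla_{y}\mathcal{J}_i(\nu^{(i)}_{*})(\tau)\ \le\ \mathbb{S}(\nu^{(i)}_{*})(\tau)=\mathbb{E}\Big\{\int_{\rho(\tau)}^{\infty}e^{-\delta s}\big[R^{(i)}_{y}(X(s),\nu^{(i)}_{*}(s))-\delta\big]\,ds\ \Big|\ \mathcal{F}_{\tau}\Big\},$$
with equality precisely when $\tau$ is a time of increase of $\nu^{(i)}_{*}$; class (D) of $\mathbb{S}(\nu^{(i)}_{*})$ follows from Assumption \ref{Assumptionpi} as in (\ref{classD}). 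Writing its Doob--Meyer decomposition $\mathbb{S}(\nu^{(i)}_{*})=M^{(i)}-A^{(i)}$ and arguing as in Subsections \ref{Banksec}--\ref{ExCD} (martingale property off $\mathcal{A}$ via $\rho(u_1\wedge\rho(t))=\rho(u_2\wedge\rho(t))$ and Revuz and Yor \cite{RevuzYor}, Corollary $3.6$; submartingale property on each excursion of $\mathcal{A}$, with compensator density nonnegative by the analogue of Lemma \ref{piprimoer}), I would identify
$$dA^{(i)}(s)=e^{-\delta s}\big[R^{(i)}_{y}(X(s),\beta_i\theta(s))-\delta\big]\,\mathds{1}_{\mathcal{A}}(s)\,ds .$$

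The decisive step is then to exhibit a single measure valid for all $N$ firms: using the proportionality $l^{(i)}=k_iX$, the explicit form of $k_i$, Lemma \ref{tausigma} and the equalization of marginal operating profits at common investment times (cf.\ Remark \ref{bothinvest}), I would show that the $dA^{(i)}$ all coincide --- equivalently, that the Doob--Meyer compensator of the Snell envelope of $\max_{i\le N}\nabla_{y}\mathcal{J}_i(\nu^{(i)}_{*})$ (as in the proof of Theorem \ref{thmnecessity}) reduces to any one of them --- and I would set $d\lambda$ equal to this common measure, which is flat off $\mathcal{A}\subseteq\{s:\sum_i\nu^{(i)}_{*}(s)=\theta(s)\}$. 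The three conditions (\ref{KT1}) are then checked as in Theorem \ref{dynamictheorem}: the first is the Snell-envelope bound above together with $\mathbb{E}\{\int_{[\tau,T)}d\lambda\,|\,\mathcal{F}_\tau\}=\mathbb{S}(\nu^{(i)}_{*})(\tau)$ (the supergradient vanishing at infinity); the second is the equality case of that bound, integrated against $d\nu^{(i)}_{*}$; the third holds because $d\lambda$ is carried by $\mathcal{A}$, on which $\theta-\sum_i\nu^{(i)}_{*}$ vanishes.

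Theorem \ref{KTEuropa} then gives optimality of $\underline{\nu}_{*}$, and the absolute continuity of $d\lambda$, with Radon--Nikodym derivative $e^{-\delta t}[R^{(i)}_{y}(X(t),\beta_i\theta(t))-\delta]\mathds{1}_{\mathcal{A}}(t)$, is immediate from the density above. I expect the main obstacle to be the reconciliation step --- that the $N$ a priori distinct single-firm compensators $dA^{(i)}$ actually agree, so that one Lagrange multiplier suffices --- which is the genuinely multivariate feature absent from Section \ref{Applications}; it rests on the common running maximum $Z$, on Lemma \ref{tausigma}, and on the fact that on the binding set all firms invest simultaneously with marginal profits tied together. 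The customary care with the one-sided limits $\theta(\cdot+)$ versus $\theta(\cdot)$ in the definitions of $\mathcal{A}$, $\rho(\tau)$ and $\sigma_1(\tau),\sigma_2(\tau)$ is needed exactly as in Section \ref{Applications}.
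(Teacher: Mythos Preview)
Your overall route---verify conditions (\ref{KT1}) for the candidate $\underline{\nu}_{*}$ via Theorem \ref{KTEuropa}---matches the paper's, and the preliminary observations ($\nu^{(i)}_{*}=\beta_iZ\vee y^{(i)}$, common $\rho(\tau)$ via Lemma \ref{tausigma}, class~(D), the single--firm Snell envelope bound) are all in line with the paper.

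Where you diverge, and where the genuine gap lies, is the ``decisive step''. You propose to compute the $N$ single--firm compensators $dA^{(i)}$ separately and then \emph{show that they all coincide}, setting $d\lambda$ equal to the common value. This is not what the paper does, and the claimed coincidence in fact fails: on $\mathcal{A}$ your density is
\[
e^{-\delta s}\Big[\big(X(s)\textstyle\sum_{j}k_j/(k_i\theta(s))\big)^{\alpha_i}-\delta\Big],
\]
which depends on $i$ through both $k_i$ and the exponent $\alpha_i$, and is not constant in $i$ when the $\alpha_i$ differ. Your proposed justifications do not rescue this: the proportionality $l^{(i)}=k_iX$ and Lemma \ref{tausigma} only deliver a common stopping time $\rho(\tau)$, while Remark \ref{bothinvest} asserts equality of the \emph{conditional tail integrals} $\mathbb{E}\{\int_{\tau}^{\infty}e^{-\delta s}R^{(i)}_y\,ds\,|\,\mathcal{F}_\tau\}$ at common investment times---not pointwise equality of the compensator densities on $\mathcal{A}$. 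The paper avoids this altogether: at $\rho_\theta(\tau)$ it invokes the equality of Remark \ref{bothinvest} to replace the tail term $\mathbb{E}\{\int_{\rho_\theta(\tau)}^{\infty}e^{-\delta s}R^{(i)}_y(X(s),\nu^{(i)}_*(s))\,ds\,|\,\mathcal{F}_\tau\}$ by the $\beta$--weighted sum $\sum_j\beta_j\,\mathbb{E}\{\int_{\rho_\theta(\tau)}^{\infty}e^{-\delta s}R^{(j)}_y(X(s),\nu^{(j)}_*(s))\,ds\,|\,\mathcal{F}_\tau\}$, which produces a \emph{single} Snell envelope $\mathbb{S}_\beta(\underline{\nu}_*)$ of the process $\mathbb{E}\{\int_t^{\infty}e^{-\delta s}\sum_j\beta_jR^{(j)}_y(X(s),\nu^{(j)}_*(s))\,ds\,|\,\mathcal{F}_t\}-e^{-\delta t}$. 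Its compensator is the Lagrange multiplier, and the paper obtains
\[
d\lambda(t)=e^{-\delta t}\Big[\textstyle\sum_{i=1}^{N}\beta_i\big(R^{(i)}_y(X(t),\beta_i\theta(t))-\delta\big)\Big]\mathds{1}_{\{\sum_i l^{(i)}(\cdot)>\theta(\cdot+)\}}(t)\,dt,
\]
a convex combination of your $N$ densities, not any single one of them. So the reconciliation you correctly flag as the ``main obstacle'' is resolved in the paper not by forcing the $dA^{(i)}$ to agree, but by passing to their $\beta$--average from the outset; your final Radon--Nikodym formula for $d\lambda$ is accordingly off.
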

\begin{proof}
Let us check that $\nu^{(i)}_{*}(t)$ satisfies the first order conditions of Theorem \ref{KTEuropa}.
Obviously $\sum_{i=1}^N \nu^{(i)}_{*}(t) \leq \theta(t)$ a.s.\ for all $t \geq 0$.

The arguments of the proof are similar to those in the proof of Theorem \ref{dynamictheorem}. Take $\tau \in \mathcal{T}$ arbitrary but fixed, and define the stopping time $\rho_{\theta}(\tau)$ as in (\ref{tauthetaBank}) but with $\sum_{i=1}^N l^{(i)}$ instead of $l$; that is,
\beq
\label{tauthetaEUROPA}
\rho_{\theta}(\tau):=\inf\{s \geq \tau: \sum_{i=1}^N l^{(i)}(s) > \theta(s+)\}.
\eeq
Notice that $\rho_{\theta}(\tau)$ is a time of increase for $ \sup_{\tau \leq u < s}l^{(i)}(u)$, $s > \rho_{\theta}(\tau)$, $i=1,2,...,N$, thanks to Lemma \ref{tausigma}, and also that 
$$\nu^{(i)}_{*}(s) \geq \sup_{\tau \leq u < s}l^{(i)}(u) \qquad \mbox{for}\,\,s \in (\tau,\rho_{\theta}(\tau)],$$
with equality if and only if $\tau$ is a time of investment for firm $i$.

Fix $i=1,2,...,N,$ and consider $\mathbb{E}\{\,\int_{\tau}^{\infty} e^{-\delta s} R^{(i)}_{y}(X(s),\nu_{*}^{(i)}(s)) ds\, | \mathcal{F}_{\tau}\}.$ Split the integral into two integrals $\int_{\tau}^{\rho_{\theta}(\tau)}$ and $\int_{\rho_{\theta}(\tau)}^{\infty}$.
Since $\rho_{\theta}(\tau)$ is a time of increase for every $\nu_{*}^{(i)}$, Remark (\ref{bothinvest}) holds and we may write
\begin{eqnarray}
\label{checkK12CDb}
\lefteqn{\mathbb{E}\bigg\{\,\int_{\tau}^{\infty} e^{-\delta s} R^{(i)}_{y}(X(s),\nu_{*}^{(i)}(s)) ds\, \Big| \mathcal{F}_{\tau}\bigg\} 
= \mathbb{E}\bigg\{\,\int_{\tau}^{\rho_{\theta}(\tau)} e^{-\delta s} R^{(i)}_{y}(X(s),\nu_{*}^{(i)}(s)) ds\,\Big | \mathcal{F}_{\tau}\bigg\}}   \\
& &  +\,\mathbb{E}\bigg\{\,\int_{\rho_{\theta}(\tau)}^{\infty} e^{-\delta s} \beta_i R^{(i)}_{y}(X(s),\nu_{*}^{(i)}(s)) ds\, \Big| \mathcal{F}_{\tau}\bigg\} 
+ \mathbb{E}\bigg\{\,\int_{\rho_{\theta}(\tau)}^{\infty} e^{-\delta s} \sum_{j \neq i} \beta_j R^{(j)}_{y}(X(s),\nu_{*}^{(j)}(s)) ds\,\Big | \mathcal{F}_{\tau}\bigg \} \nonumber
\end{eqnarray}
since $\mathcal{F}_{\tau} \subseteq \mathcal{F}_{\rho_{\theta}(\tau)}$.
Now, as in the proof of Theorem \ref{dynamictheorem}, we use the definition of $\rho_{\theta}(\tau)$ and the backward equation (\ref{BackCD2firms}) corresponding to $l^{(i)}$ to write
\begin{eqnarray}
\label{checkK12CDc}
\lefteqn{\mathbb{E}\bigg\{\,\int_{\tau}^{\infty} e^{-\delta s} R^{(i)}_{y}(X(s),\nu_{*}^{(i)}(s)) ds\, \Big| \mathcal{F}_{\tau}\bigg \} } \\
& & \leq   e^{-\delta \tau} + \mathbb{E}\bigg\{\,\int_{\rho_{\theta}(\tau)}^{\infty} e^{-\delta s} \Big[\sum_{i=1}^N \beta_i R^{(i)}_{y}(X(s),\sup_{0 \leq u < s}(l^{(i)}(s) \wedge \beta_i\theta(s))) - \delta \Big] ds\, \Big| \mathcal{F}_{\tau}\bigg\}, \nonumber
\end{eqnarray}
with equality if and only if $d\nu^{(i)}_{*}(\tau) > 0$.
By employing arguments as those in the proof of Bank \cite{Bank}, Theorem $3.1$, it is not hard to see that the last term in the right-hand side of (\ref{checkK12CDc}) does coincide with the Snell envelope $\mathbb{S}_{\beta}(\underline{\nu}_{*})$, $\underline{\nu}_{*}:=(\nu_{*}^{(1)}, \dots, \nu_{*}^{(N)})$, of the optional process $\mathbb{E}\{\,\int_{t}^{\infty} e^{-\delta s}\sum_{i=1}^N \beta_i R^{(i)}_{y}(X(s),\nu_{*}^{(i)}(s)) ds\,| \mathcal{F}_{t}\} - e^{-\delta t}$. Thanks to Assumption \ref{Assumptionpi}, $\mathbb{S}_{\beta}(\underline{\nu}_{*})$ is a supermartingale of class (D). If we now set (as in the proof of Theorem \ref{dynamictheorem}) $d\lambda$ equal to the unique predictable (hence optional), nondecreasing compensator of $\mathbb{S}_{\beta}(\underline{\nu}_{*})$, we obtain optimality of $\nu_{*}^{(i)}$ as in (\ref{twooptimalCD}).

Next, following again the same line of the proof of Theorem \ref{dynamictheorem}, we may also conclude that the Lagrange multiplier for the $N$-firms Social Planner problem is
$$d\lambda(t):=e^{-\delta t} \Big[\sum_{i=1}^N \beta_i(R^{(i)}_{y}(X(t),\beta_i \theta(t)) - \delta )\Big]\mathds{1}_{\{\sum_{i=1}^N l^{(i)}(\cdot) > \theta(\cdot +)\}}(t)\,dt,$$
which is an absolutely continuous, optional measure, nonnegative by Lemma \ref{piprimoer}.
\end{proof}

\begin{Remark}
For general operating profit functions satisfying Assumption \ref{Assumptionpi}, we expect the solution of the Social Planner problem (\ref{problemaEuropa}) to be
\begin{equation*}
\label{feelinggenerale}
\nu^{(i)}_{*}(t) = \sup_{0 \leq u < t}(l^{(i)}(u) \wedge \beta_i(u) \theta(u)) \vee y^{(i)},\qquad i=1,2,...,N, 
\end{equation*}
with $\beta_i(t):=\frac{l^{(i)}(t)}{\sum_{j=1}^N l^{(j)}(t)}$.
\end{Remark}

\bigskip

\textbf{Acknowledgments.} The authors thankfully acknowledge two anonymous referees for their pertinent and useful comments. The second author would also like to thank Jan Henrik Steg for the constructive discussions.

%%%%%%%%%%%%%%%%%%%%%%%%%%%%%%%%%%%%%%%%%%%%%%%%%%%%%%%%%%%%%%%%%%%%%%%%%%%%%%%%%%%%%%%%%%%%%%%%%%%%%%%%%%%%%%%%%%%%%%%%%%%%%%%%%%%%%%%%%%%%%%%%%%%%%%%%%%%%%%%%%%%%%%%%%%%%%%%%%%%%%%%%%%%%%%%%%%%%%%%%%%%%%%%%%%%%%%%%%%%%%%%%%%%%%

\end{document}